\newtheorem{theorem}{Theorem}[section]
\newtheorem{lemma}[theorem]{Lemma}
\newtheorem{proposition}[theorem]{Proposition}
\theoremstyle{definition}
\newtheorem{remark}[theorem]{Remark}
\numberwithin{equation}{section}
\newskip\aline \newskip\halfaline
\def\skipaline{\vskip\aline}
\def\qedbox{$\rlap{$\sqcap$}\sqcup$}
\def\qed{\nobreak\hfill\penalty250 \hbox{}\nobreak\hfill\qedbox\skipaline}
\def\proofend{\eqno{\mbox{\qedbox}}}
\newcommand{\one}{\mathbbm{1}}
\newcommand{\bN}{{{\mathbb N}}}
\newcommand\bR{{\mathbb R}}
\newcommand\bZ{{\mathbb Z}}
\DeclareMathOperator{\tr}{{\rm tr}}
 \DeclareMathOperator{\Hom}{Hom}
\DeclareMathOperator{\var}{\boldsymbol{var}}
\DeclareMathOperator{\GOE}{GOE}
\DeclareMathOperator{\vol}{vol}
\newcommand{\ba}{\boldsymbol{a}}
\newcommand{\be}{{\boldsymbol{e}}}
\newcommand{\ii}{\boldsymbol{i}}
\newcommand{\bp}{{\boldsymbol{p}}}
\newcommand{\bs}{{\boldsymbol{s}}}
\newcommand{\bt}{{\boldsymbol{t}}}
\newcommand{\bu}{{\boldsymbol{u}}}
\newcommand{\bv}{{\boldsymbol{v}}}
\newcommand{\bx}{{\boldsymbol{x}}}
\newcommand{\bsC}{\boldsymbol{C}}
\newcommand{\bsE}{\boldsymbol{E}}
\newcommand{\bsI}{\boldsymbol{I}}
\newcommand{\bsK}{{\boldsymbol{K}}}
\newcommand{\bsN}{\boldsymbol{N}}
\newcommand{\bsP}{\boldsymbol{P}}
\newcommand{\bsZ}{\boldsymbol{Z}}
\newcommand{\bgamma}{{\boldsymbol{\gamma}}}
\newcommand{\bGamma}{\boldsymbol{\Gamma}}
\newcommand{\blam}{{\boldsymbol{\lambda}}}
\newcommand{\si}{{\sigma}}
\newcommand{\ve}{{\varepsilon}}
\newcommand{\eF}{\EuScript{F}}
\newcommand{\eI}{{\EuScript{I}}}
\newcommand{\eL}{\EuScript{L}}
\newcommand{\eO}{\EuScript{O}}
\newcommand{\eP}{\EuScript{P}}
\newcommand{\eR}{\EuScript{R}}
\newcommand{\eS}{\EuScript{S}}
\newcommand{\eX}{\mathscr{X}}
\newcommand{\eZ}{\EuScript{Z}}
\newcommand{\ra}{\rightarrow}
\newcommand{\Llra}{{\Longleftrightarrow}}
\newcommand{\lan}{\langle}
\newcommand{\ran}{\rangle}
\def\inpr{\mathbin{\hbox to 6pt{\vrule height0.4pt width5pt depth0pt \kern-.4pt \vrule height6pt width0.4pt depth0pt\hss}}}
\newcommand{\pa}{\partial}
\newcommand{\ha}{\widehat{a}}
\newcommand{\whF}{\widehat{F}}
\begin{document}

\title[A central limit theorem]{A CLT concerning critical points of random functions on a Euclidean space}


\date{Started  August 28, 2015. Completed  on September 14, 2015. Last modified on {\today}. }

\author{Liviu I. Nicolaescu}

\address{Department of Mathematics, University of Notre Dame, Notre Dame, IN 46556-4618.}
\email{nicolaescu.1@nd.edu}
\urladdr{\url{http://www.nd.edu/~lnicolae/}}

\subjclass{60B20, 60D05, 60F05,  60G15}
\keywords{Gaussian random functions, critical points, Wiener chaos, Gaussian random matrices, central limit theorem}

\begin{abstract} We prove a central limit theorem concerning  the number of critical  points in large cubes of an isotropic  Gaussian random function on a Euclidean space.
\end{abstract}

\maketitle

\tableofcontents

\section{Introduction}  Throughout this paper $X(\bt)$ denotes a centered, isotropic Gaussian random function on $\bR^m$, $m\geq 2$.

Assume that $X$ is a.s. $C^1$. For  any Borel subset $S\subset \bR^m$ we denote  by $Z(S)$ the number of critical points  of $X$ in $S$.  For a positive number $L$ we set 
\[
Z_L:=Z\bigl(\,[-L,L]^m\,\bigr),
\]
 and we form the new random variable 
\begin{equation}\label{zeta}
\zeta_L:=\frac{1}{(2L)^{m/2}}\Bigl(\, Z_L-\bsE\bigl[\, Z_L\,\bigr]\,\Bigl).
\end{equation}
In this paper we will prove that, under certain assumptions on $X(\bt)$,  the sequence of random variables $(\zeta_N)$, converges in distribution as $N\to\infty$ to a Gaussian with mean zero and finite, positive variance.

The proof, inspired from the recent work of Estrade-Le\'{o}n \cite{EL},    uses  the Wiener   chaos decomposition  of $Z_L$.

\subsection*{Notations} 

\begin{itemize}

\item $\bN:=\bZ_{>0}$, $\bN_0:=\bZ_{\geq 0}$.

\item For any positive integer $n$ we denote by $\one_n$ the identity map $\bR^n\to\bR^n$.

\item For $\bt=(t_1,\dotsc, t_m)\in\bR^m$ we set 
\[
|\bt|:=\sqrt{\sum_{k=1}^m t_k^2},\;\;|\bt|_\infty:=\max_{1\leq k\leq m}|t_k|.
\]

\item  For any  $v\geq 0$ we denote by $\bgamma_v$ the   Gaussian measure on $\bR$ with mean $0$ and variance $v$.

\item If $X$ is a scalar random variable, then we will use the notation $X\in N(0,v)$ to indicate that  $X$ is a normal random variable with mean zero and variance $v$.

\item  If $A$ is a subset  of a  given set $S$, then we  denote by $\bsI_A$ the indicator function of $A$
\[
\bsI_A:S\to\{0,1\},\;\;\bsI_A(s)=\begin{cases}
1, & s\in A,\\
0, & s\in S\setminus A.
\end{cases}
\]
\item If $X$  is a random vector, then   we denote by $\bsE[X]$ and respectively $\var(X)$ the  mean and respectively the variance of $X$. 
\end{itemize}

\section{Statement of the main result}
\setcounter{equation}{0}

Denote by $K(\bt,\bs)$ the covariance kernel of $X(\bt)$, 
\[
K(\bs,\bt):=\bsE\bigl[\, X(\bt)X(\bs)\,\bigr],\;\;\bt,\bs\in\bR^m.
\]
The  isotropy   of $X$ implies that there exists a radially symmetric function $C:\bR^m\to \bR$ such that $K(\bt,\bs)=C(\bt-\bs)$, $\forall  \bt,\bs\in\bR$.  We denote by $\mu(d\blam)$ the spectral measure of $X$ so that $C(\bt)$ is the Fourier  transform of $\mu$
\begin{equation}\label{spectral}
C(\bt)=(2\pi)^{-\frac{m}{2}}\int_{\bR^m} e^{-\ii(\bt,\blam)}\mu(d\blam).
\end{equation}

\subsection{The setup} For the claimed  central limit result to hold, we need to make certain assumptions on the random function $X(t)$. These assumptions  closely mirror the assumptions in \cite{EL}.

\medskip

\noindent {\bf Assumption A1.}  \textit{The random function $X(t)$ is   almost surely $C^3$}.

\medskip

To formulate our next assumption we set
\begin{equation}\label{psi}
\psi(\bt):=\max\bigl\{\, |\pa^\alpha_\bt C(\bt)|;\;\;|\alpha|\leq 4\,\bigr\},\;\;\bt\in\bR^m,
\end{equation}
where for any multi-index $\alpha=(\alpha_1,\dotsc, \alpha_m)\in\bN_0^m$ we set
\[
|\alpha|:=\alpha_1+\cdots+\alpha_m,\;\;\pa^\alpha_\bt=\pa^{\alpha_1}_{t_1}\cdots \pa^{\alpha_m}_{t_m}.
\]
{\bf Assumption A2.} 
\[
\lim_{|\bt|\to\infty}\psi(\bt)=0\;\;\mbox{and}\;\; \psi\in L^1(\bR^m).
\]
Our next assumption involves the spectral measure $\mu(d\blam)$ and it states  in precise terms that this measure has a continuous density that decays   rapidly at $\infty$.

\medskip

\noindent {\bf Assumption A3.} \textit{There exists a nontrivial even, continuous function $w:\bR\to[0,\infty)$ such that
\[
\mu(d\blam) =w(|\blam|)d\blam.
\]
Moreover $w$  has a fast decay at $\infty$, i.e.,}
\[
|\lambda|^4 w(|\blam|)  \in L^1(\bR^m)\cap L^2(\bR^m). 
\]

\medskip

\begin{remark} (a) Let us observe that {\bf A1}-{\bf A3} imply that
\[
\psi\in L^q(\bR^m),\;\;\forall q>0.
\]
(b) The assumptions {\bf A1}-{\bf A3}  are automatically satisfied if the density $w$ is a Schwartz function on $\bR$.

\smallskip

\noindent (c) The paper \cite{EL}   includes one  extra assumption on $X$, namely that the Gaussian vector 
\[
J_2(X(0) ):=\bigl(\, X(0),\nabla X(0),\nabla^2 X(0)\,\bigr).
\]
is nondegenerate. We do not need this nondegeneracy in this paper, but  we want to mention that it is implied  by  Assumption {\bf A3}; see Proposition \ref{prop: j2}.   \qed
 \end{remark}

 Fix  real numbers $u\geq 0$ and $v>0$. Denote by  $\eS_m$ the space of real symmetric $m\times m$ matrices,   and by $\eS_m^{u,v}$ the space $\eS_m$     equipped with the centered Gaussian measure $\bGamma_{u,v}$ uniquely determined by the covariance equalities
 \[
 \bsE[a_{ij}a_{k\ell}]=  u\delta_{ij}\delta_{k\ell}+ v(\delta_{ik}\delta_{j\ell}+ \delta_{i\ell}\delta_{jk}),\;\;\forall 1\leq i,j,.k,\ell\leq m.
 \]
 In particular we have
 \begin{equation}\label{covmat}
 \bsE[a_{ii}^2]= u+2v,\;\;\bsE[a_{ii}a_{jj}]=u,\;\;\;\bsE[a_{ij}^2]=v,\;\;\forall 1\leq i\neq j\leq m,
 \end{equation}
while all other covariances are trivial.  The  ensemble  $\eS_m^{0,v}$ is    a rescaled version of  the Gaussian Orthogonal Ensemble  (GOE) and we will refer to it as $\GOE_m^v$.  As explained in  \cite{Ni2014, Ni2015},   the Gaussian measures  $\bGamma_{u,v}$  are  invariant with respect to the natural action of $O(m)$ on $\eS_m$. Moreover
\begin{equation}
d\bGamma_{0,v}(A)=(2v)^{-\frac{m(m+1)}{4}}\pi^{-^{\frac{m(m+1)}{4}} } \; e^{-\frac{1}{4v}\tr A^2} |dA|.
\label{eq: gov}
\end{equation} 

The ensemble $\eS_m^{u,v}$ can be given an alternate description.   More precisely   a random $A\in \eS_m^{u,v}$ can be described as a sum
\[
A= B+ \ X\one_m,\;\;B\in \GOE_m^v,\;\; X\in \bsN(0, u),\;\;\mbox{ $B$ and $X$ independent}.
\]
We write  this
\begin{equation}
\eS_m^{u,v} =\GOE_m^v\hat{+}\bsN(0,u)\one_m,
\label{eq: smgoe}
\end{equation}
where $\hat{+}$ indicates a sum of \emph{independent} variables. We set $\eS_m^v:= \eS_m^{v,v}$. Recall  from (\ref{spectral}) that
\[
\bsE\bigl[\,X(\bt)X(0)\,\bigr]=C(\bt)=K(\bt,0)=(2\pi)^{-\frac{m}{2}}\int_{\bR^m} e^{-\ii(\bt,\blam)} w(|\blam|) d\blam.
\]
Following \cite{Nispec} we define
\[ 
 s_m:=\frac{1}{(2\pi)^{m/2}}\int_{\bR^m} w(|\bx|) d\bx,\;\;d_m:= \frac{1}{(2\pi)^{m/2}}\int_{\bR^m} x_1^2w(|\bx|) d\bx,
 \]
 \[
 h_m:= \frac{1}{(2\pi)^{m/2}}\int_{\bR^m} x_1^2x_2^2w(|\bx|) d\bx.
 \]
Clearly $s_m,d_m,h_m>0$. If we set
 \begin{equation}
 I_k(w):=\int_0^\infty w(r) r^k dr,
 \label{eq: Ik}
 \end{equation}
 then we have  (see \cite{Nispec})
  \begin{equation}\label{sdh}
  \begin{split}
 (2\pi)^{m/2} s_m=\frac{2\pi^{\frac{m}{2}}}{ \Gamma(\frac{m}{2})} I_{m-1}(w),\;\;(2\pi)^{m/2} d_m= \frac{2\pi^{\frac{m}{2}}}{m \Gamma(\frac{m}{2})}I_{m+1}(w),\\
  (2\pi)^{m/2} h_m=\frac{1}{3}\int_{\bR^m}x_1^4 w(|x|) dx=\frac{2\pi^{\frac{m}{2}}}{m(m+2) \Gamma(\frac{m}{2})}I_{m+3}(w).
  \end{split}
  \end{equation}
  Then we deduce that 
  \begin{subequations}
  \begin{equation}\label{covx1}
  \bsE\bigl[\, X(0)\cdot\pa_{t_i}X(0)\,\bigr]=\bsE\bigl[\,\pa_{t_i}X(0)\cdot\pa^2_{t_jt_k}X(0)\,\bigr]=0,\;\;\forall i,j,k
  \end{equation}
  \begin{equation}\label{covx2}
  \bsE[X(0)^2]= s_m,\;\; \bsE\bigl[\,\pa_{t_i}X(0)\cdot \pa_{t_j}X(0)\,\bigr]=d_m\delta_{ij},\;\; \forall i,j,
  \end{equation}
  \begin{equation}\label{covx3}
  \bsE\bigl[\,X(0)\cdot\pa^2_{t_it_j}X(0)\,\bigr]=-d_m\delta_{ij},\;\;\forall i,j,
  \end{equation}
  \begin{equation}\label{covx4}
 \bsE\bigl[\,\pa^2_{t_it_j}X(0)\cdot\pa^2_{t_kt_\ell}X(0)\,\bigr]= h_m\bigl(\,\delta_{ij}\delta_{k\ell}+\delta_{ik}\delta_{j\ell}+ \delta_{i\ell}\delta_{jk}\bigr),\;\;\forall i,j,k,\ell.
 \end{equation}
 \end{subequations}
 The  equality (\ref{covx2}) shows that $\nabla X(0)$ is a $\bR^m$-valued  cenetered Gaussian random vector with covariance matrix $d_m\one_m$, while (\ref{covx4}) shows that  $\nabla^2 X(0)\in \eS_m^{h_m}$. \qed

\subsection{The main result} We can now state the main result of this paper.

\begin{theorem}\label{th: main} Suppose that $X(\bt)$ is a  centered, stationary, isotropic random function on $\bR^m$, $m\geq 2$ satisfying assumptions {\bf A1}, {\bf A2}, {\bf A3}. Denote  by $Z_N$ the number of critical points of $X(\bt)$ in the cube $C_N:=[-N,N]^m$. Then the following hold.

\begin{enumerate}

\item 
\begin{equation}\label{expect}
\bsE\bigl[\, Z_N\,\bigr]= C_m(w) (2N)^m,\;\;\forall N,
\end{equation}
where
\begin{equation}\label{cmw}
C_m(w)= \left(\frac{h_m}{2\pi d_m}\right)^{\frac{m}{2}}\bsE_{\eS_m^1}\bigl[\, |\det A|\,\bigr].
\end{equation}
\item There exists  a constant $V_\infty=V_\infty(m,w)>0$ such that
\begin{equation}\label{varz}
\var(Z_N)\sim V_\infty N^m\;\;\mbox{as $N\to\infty$}.
\end{equation}
Moreover, the sequence    of random variables 
\[
\zeta_N=N^{-m/2}\Bigl( \, Z_N-\bsE\bigl[\, Z_N\,\bigr]\,\Bigr)
\]
converges in law to a normal  random variable $\zeta_\infty$ with mean zero and positive variance $V_\infty$.

\end{enumerate}

\end{theorem}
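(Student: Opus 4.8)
The first assertion is the Kac--Rice formula for the expected number of zeros of the Gaussian field $\nabla X$. Assumption {\bf A3} provides (via Proposition \ref{prop: j2}) the nondegeneracy of $J_2(X(\bt))$, so Kac--Rice applies and
\[
\bsE[Z_N]=\int_{C_N}\bsE\bigl[\,|\det\nabla^2 X(\bt)|\,\big|\,\nabla X(\bt)=0\,\bigr]\,p_{\nabla X(\bt)}(0)\,d\bt .
\]
By stationarity the integrand does not depend on $\bt$, so $\bsE[Z_N]=(2N)^m\cdot\bsE\bigl[\,|\det\nabla^2 X(0)|\,\big|\,\nabla X(0)=0\,\bigr]\,p_{\nabla X(0)}(0)$. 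Now (\ref{covx2}) gives $\nabla X(0)\in N(0,d_m\one_m)$, whence $p_{\nabla X(0)}(0)=(2\pi d_m)^{-m/2}$; (\ref{covx1}) shows $\nabla X(0)$ and $\nabla^2 X(0)$ are independent, so the conditional expectation equals $\bsE[\,|\det\nabla^2 X(0)|\,]$; and (\ref{covx4}) says $\nabla^2 X(0)\in\eS_m^{h_m}$, so writing a generic element of $\eS_m^{h_m}$ as $\sqrt{h_m}\,A$ with $A\in\eS_m^1$ gives $\bsE[\,|\det\nabla^2 X(0)|\,]=h_m^{m/2}\bsE_{\eS_m^1}[\,|\det A|\,]$. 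Multiplying the three factors reproduces $C_m(w)(2N)^m$.

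For the central limit statement I follow the Wiener chaos strategy of \cite{EL}. Let $\bsY(\bt)$ be the $\bR^D$-valued stationary Gaussian field obtained from $\bigl(\nabla X(\bt),\nabla^2 X(\bt)\bigr)$ after a fixed affine change of variables making $\bsY(\bt)$ a standard Gaussian vector for each $\bt$, and let $\Gamma(\bt-\bs)$ be its matrix-valued correlation function. Assumption {\bf A2} forces each entry of $\Gamma$ to lie in $L^1(\bR^m)\cap L^\infty(\bR^m)$, hence in $L^q(\bR^m)$ for every $q\geq1$. A Kac--Rice approximation represents $Z_N$ as the $L^2(\Omega)$-limit, as $\ve\downarrow0$, of $\int_{C_N}(2\pi\ve)^{-m/2}e^{-|\nabla X(\bt)|^2/2\ve}\,|\det\nabla^2 X(\bt)|\,d\bt$; in particular $Z_N\in L^2(\Omega)$, the finiteness of $\bsE[Z_N^2]$ being the standard second-moment Kac--Rice estimate, which near the diagonal relies on the nondegeneracy from {\bf A3} and away from it on the decay of $\psi$ from {\bf A2}. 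Expanding the limiting integrand in multivariate Hermite polynomials in $\bsY(\bt)$ yields the Wiener chaos decomposition
\[
Z_N-\bsE[Z_N]=\sum_{q\geq1}Z_N[q],\qquad Z_N[q]=\int_{C_N}G_q\bigl(\bsY(\bt)\bigr)\,d\bt ,
\]
where $G_q$ is the $q$-th Hermite component of that integrand. Since $Z_N$ is unchanged under $X\mapsto -X$, while $\bsY\mapsto-\bsY$ acts as $(-1)^q$ on the $q$-th chaos, all odd chaoses vanish and the sum runs over even $q\geq2$.

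The CLT then rests on three points. First, for each fixed even $q$, a Breuer--Major-type computation using $\Gamma\in L^q(\bR^m)$ shows $\var(Z_N[q])\sim V_q N^m$ with $0\le V_q<\infty$, while the fourth moment theorem of Nualart--Peccati (and its vector-valued refinement) upgrades this to the convergence in law of $N^{-m/2}Z_N[q]$ to a centered Gaussian of variance $V_q$, jointly for finitely many $q$. Second, the $L^1$ decay of $\Gamma$ together with the summability of $(V_q)$ gives the uniform tail estimate $\sup_N\var\bigl(N^{-m/2}\sum_{q>Q}Z_N[q]\bigr)\to0$ as $Q\to\infty$; combining this with the first point in the usual way yields that $\zeta_N$ converges in law to a centered Gaussian of variance $V_\infty=\sum_{q}V_q<\infty$. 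Third, one must rule out $V_\infty=0$: this is done by computing the leading Hermite kernel explicitly and checking that the second chaos contributes $V_2>0$.

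The two genuinely delicate steps are the second-moment Kac--Rice bound $\bsE[Z_N^2]<\infty$ near the diagonal --- where $\bigl(\nabla X(\bt),\nabla X(\bs)\bigr)$ degenerates as $\bs\to\bt$ and the fast spectral decay built into {\bf A3} is what makes the estimate work --- and the strict positivity $V_\infty>0$, which is not a soft consequence of the construction and forces one to identify a non-vanishing chaotic component. Once the uniform variance tail estimate is available, passing from the asymptotic Gaussianity of each chaos to that of the whole normalized sum is routine.
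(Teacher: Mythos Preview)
Your outline is essentially the paper's own proof: the $L^2$-approximation of $Z_N$ by mollified Kac--Rice integrals, the Hermite/Wiener chaos expansion of the integrand, the Breuer--Major computation for each fixed chaos, the fourth-moment theorem for asymptotic normality of each $\zeta_{N,q}$, the uniform tail bound over $q$, and the explicit verification that $V_{2,\infty}>0$. Two small remarks. First, the paper derives the expectation formula from the zeroth chaos of the Hermite expansion rather than from the conditional Kac--Rice formula; your direct route is of course equivalent and slightly shorter. Second, your phrasing of the tail step is circular: the summability of $(V_q)$ is a \emph{consequence} of the uniform tail estimate, not an input to it. In the paper the uniform bound $\sup_N\sum_{q>Q}V_{q,N}\to 0$ is obtained directly, via Arcones' inequality applied to the Hermite correlations together with the $L^1$ decay of $\psi$, and only then is $\sum_q V_{q,\infty}<\infty$ read off. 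Also, the finiteness of $\bsE[Z_N^2]$ (your ``second-moment Kac--Rice bound near the diagonal'') is exactly Proposition~\ref{prop: zl2}, whose proof the paper imports from \cite{EL} and is the one place where isotropy, not merely the spectral decay in {\bf A3}, is used in an essential way.
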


\begin{remark} (a) The isotropy condition on  $X(\bt)$  may be a bit restrictive,  but we  believe that the techniques in \cite{EL} and this paper extend to  the  more general case of stationary  random functions. (The only place where isotropy  plays an crucial role is in the proof of Proposition \ref{prop: zl2}.)  However, for the   geometric applications we have in mind, the  isotropy   is a natural assumption. Let us elaborate on this point.

Suppose that $(M,g)$ is a compact $m$-dimensional Riemannian manifold,  such that $\vol_g(M)=1$. Denote by $\rho$ the injectivity radius of $g$. For $\ve>0$  we denote by $g_\ve$ the rescaled metric  $g_\ve:= \ve^{-2} g$. Intuitively,  as $\ve\to 0$, the metric $g_\ve$ becomes flatter and flatter.  Denote by $\Delta_g$ the Laplacian  of $g$  and by  $\Delta_{\ve}$ of  $g_\ve$.  Let 
\[
\lambda_0\leq \lambda_1\leq \lambda_2\leq \cdots
\]
be the eigenvalues of $\Delta_g$, multiplicities included. Fix an orthonormal basis of $L^2(M,dV_g)$ consisting of eigenfunctions  $\Psi_k$ of $\Delta_g$,
\[
\Delta_g \Psi_k=\lambda_k\Psi_k
\]
Then the eigenvalues of $\Delta_\ve$ are $\lambda_k(\ve)=\ve^2\lambda_k$ with corresponding eigenfunctions  $\Psi_k^\ve:=\ve^{\frac{m}{2}}\Psi_k$. 

We now define the random function $Y_\ve(\bp)$ on $M$,
\[
Y_\ve(\bp)=\sum_{k=0}^\infty w\bigl(\,\sqrt{\lambda_k(\ve)}\,\bigr)^{\frac{1}{2}}Z_k \Psi_k^\ve(\bp),
\]
where $(Z_k)_{k\geq 0}$ is a sequence of independent standard normal random variables.

Fix a point $\bp_0\in M$ and denote by $\exp_\ve$ the exponential map $T_{\bp_0} M\to M$ defined by the metric $g_\ve$.  (This is a diffeomorphism onto when restricted  to the ball of radius $\ve^{-1}\rho$ of the tangent space $T_{\bp_0}M$ equipped with the metric $g_\ve$.)  Denote by $\eR_\ve$ the rescaling map
\[
T_{\bp_0}M\to T_{\bp_0}M,\;\; \bv\mapsto\ve  \bv.
\]
This map is an isometry   $(T_{\bp_0}M,g)\to (T_{\bp_0} M,g_\ve)$. We denote by $X_\ve$ the random function on  the Euclidean space $(T_{\bp_0} M,g)$ obtained   by pulling back $Y_\ve$ via the map $\exp_\ve\circ  \eR_\ve$.  The  random function $X_\ve(t)$ is Gaussian and  its covariance kernel converges in the $C^\infty$ topology   as $\ve \to 0$ to the covariance kernel of  random function $X$ we are investigating in this paper.   Denote by  $N(X_\be, B_r)$ the  number critical points  of $Y_\ve$ in a  $g$-ball of  radius $r<\rho$ on $M$, and by $N(X,B_ R)$ the number of critical points of $X$ in  a ball of radius $R$.  In \cite{Nispec} we have shown that
\[
\bsE\bigl[\, N(Y_\ve, B_r)\,\bigr]\sim \bsE[ \, N(X,B_{r/\ve}\,\bigr]= const.\; \ve^{-m}\;\;\mbox{as $\ve\to 0$}.
\]
Additionally, in \cite{Nivar} we looked at the special case when $M$ is a flat $m$-dimensional torus  and we  showed that the  variances random variables
\[
\ve^{-m/2}\Bigl(\, N(Y_\ve, B_r)-\bsE\bigl[\, N(Y_\ve, B_r)\,\bigr]\,\Bigr),\;\;\ve^{-m/2}\Bigl(\, N(X, B_{r/\ve})-\bsE\bigl[\, N(X, B_{r/\ve})\,\bigr]\,\Bigr)
\]
have the identical  \emph{finite} limits  as $\ve>0$. In \cite{Nivar} we were not able to prove that this common limit is nonzero, but Theorem \ref{th: main} shows this to be the case.  

These facts suggest that the random variable $N(Y_\ve, B_r)$ may satisfy a central limit theorem of the type proved  in \cite{AL, GW}.  We will pursue this line of investigation elsewhere. \qed
\end{remark}

\subsection{Outline of the proof.}\label{ss: 23}  The strategy of proof  owes a great deal to \cite{EL}.  The  Gaussian random variables  $X(\bt)$, $\bt\in\bR^m$, are defined on  the same probability space $(\Omega, \eF, \bsP)$. We denote by $\eX$ the $L^2$-closure of the span of these variables. Thus, $\eX$ is Gaussian Hilbert space in the sense of \cite{Jan, Maj}, and  we  have a white-noise Hilbert space  isomorphism
\[
W: L^2\bigl(\, \bR^m, \sqrt{w(\blam)}\,d\blam\,\bigr)\to \eX.
\]
  We set $\widehat{\eX}:=L^2(\Omega, \widehat{\eF},\bsP)$, where  $\widehat{\eF}\subset \eF$ is the $\si$-algebra generated by the variables $X(\bt)$, $\bt\in\bR^m$. 

Using the Kac-Rice formula we  show in  Subsections \ref{ss: 31} and \ref{ss: 32}     that $Z_N\in\widehat{\eX}$  and we  describe  its  Wiener chaos decomposition.    In particular, this leads to a proof of (\ref{expect}).
To prove that the random variables
\[
\zeta_N= N^{-m/2}\bigl(\, Z_N- \bsE[Z_N]\,\bigr)
\]
converge in law to a normal  random variable we rely  on the    very general Breuer-Major type  central limit  theorem \cite[Thm. 6.3.1]{NP}, \cite{NPP}.     Here are the details.  

Denote by $\zeta_{N,q}$ the $q$-th chaos component   of $\zeta_N\in\widehat{\eX}$.   According to \cite[Thm.6.3.1]{NP},  to prove  that $\zeta_N$ converges in law  as $N\to\infty$ to a  normal random variable  $\zeta_\infty$ it suffices to prove the following.

\begin{enumerate}

\item  For every $q\in\bN$ there exists $V_{q,\infty}\geq 0$ such that
\[
\lim_{N\to\infty} \var(\zeta_{N,q})=V_{q,\infty}.
\]
\item 
\[
\lim_{Q\to\infty} \sup_N \sum_{q\geq Q} \var(\zeta_{N,q}) =0.
\]
\item  For   each $q\in\bN$, the random variables $\zeta_{N,q}$ converge in law  as $N\to\infty$   to a normal random variable, necessarily of variance $V_{q,\infty}$.
\end{enumerate}

We prove (i) and (ii) in the first half of  Subsection \ref{ss: 33}; see  (\ref{vq2}) and respectively Lemma \ref{lemma: VQ}. In the second half of this subsection we prove  that $V_{2,\infty}>0$.     

To prove   (iii) we rely on the fourth-moment theorem \cite[Thm. 5.2.7]{NP}, \cite{NuaPe}. The details are identical to the ones employed in the proof of  \cite[Prop. 2.4]{EL}.

The variance  of the  limiting normal random variable $N_\infty$ is
\[
V_\infty=\sum_{q\geq 2} V_{q,\infty}  <\infty.
\]
Appendix \ref{s: a} contains estimates  of the lower order terms in  the Hermite polynomial decomposition of $|\det A|$ where $A\in \eS_m^v$, $m\gg 1$.  These  estimates can be used to produce explicit positive lower bounds  for $V_\infty$ for large $m$.

\subsection{Related work}  Central limit  theorems concerning   counts of zeros of random functions go back a while, e.g. T. Malevich \cite{Mal69} (1969) and J. Cuzik \cite{Cuz76} (1976).

  The usage of  Wiener chaos decompositions  and of  Breuer-Major type results  in proving  such  central limit  theorems is more recent, late 80s early 90s and we want to mention here the pioneering  contributions of  Chambers and Slud \cite{ChaSl},   Slud \cite{Slud91, Slud94},  Kratz and Le\'{o}n \cite{KL1997} and Sodin-Tsirelson \cite{ST}.

 This topic  was further elaborated by  Kratz  and Le\'{o}n  in \cite{KL2001}  where they  also proved a central limit theorem  concerning the length of the zero set of a random function of two variables. We refer to \cite{AzWs}  for  a particularly nice presentation of these  developments.   
 
  Aza\"{i}s and Le\'{o}n \cite{AL} used  the technique of  Wiener  chaos decomposition to give a  shorter and more conceptual proof   to a central limit theorem   due to Granville and Wigman \cite{GW}  concerning the number  of zeros of random trigonometric polynomials of large degree. This technique was then succesfully   used by  Aza\"{i}s, Dalmao and Le\'{o}n in \cite{AzDaLe}  to prove a CLT concerning the number of zeros of  Gaussian   even trigonometric polynomials and by Dalmao in \cite{Dal} to  prove a CLT concerning the number of zeros of    usual polynomials   in the Shub-Smale probabilistic ensemble. Recently, Adler and Naitzat \cite{AN} used  Hermite  decompositions  to  prove a CLT  concerning Euler integrals of random functions.

 \subsection*{Acknowledgments} I want to thank Yan Fyodorov for  sharing with me the tricks in Lemma \ref{lemma: fy}.

\section{Proof of the main result}
\setcounter{equation}{0}

The random variables $X(\bt)$, $\bt\in\bR^m$ are defined on a common  probability space $(\Omega,\eO, \bsP)$.   We denote by $\eO_X$ the $\si$-subalgebra of $\eO$ generated by the variables $X(t)$, $t\in\bR^m$. For simplicity we set $L^2(\Omega):=L^2(\Omega, \eO_X,\bsP)$.

As detailed in  e.g. \cite{Jan, Maj, Nua}, the space  $L^2(\Omega)$ admits an orthogonal  Wiener chaos decomposition
\[
L^2(\Omega)=\overline{\bigoplus_{q=0}^\infty L^2(\Omega)_q},
\]
where $L^2(\Omega)_q$ denotes the  $q$-th chaos component. We let  $\eP_q:L^2(\Omega)\to L^2(\Omega)_q$ denote the  orthogonal projection on the $q$-th chaos component.

Let $T$ denote a compact parallelipiped 
\[
T:=[a_1,b_1]\times \cdots \times [a_m,b_m],\;\; a_i<b_i,\;\;\forall i=1,\dotsc, m.
\]
From \cite[Thm.11.3.1]{AT}, we deduce that $X$  is a.s. a Morse function on $T$. In particular,  almost surely there are no critical points on the boundary of $T$. 

\subsection{Chaos decompositions of  functionals of  random symmetric matrices.}\label{ss: 31} The dual space $(\eS_m^v)^*=\Hom(\eS_m^v,\bR)$ is a finite dimensional Gaussian  linear space in the sense of \cite{Jan} spanned by the entries  $(a_{ij})_{1\leq i\leq j\leq m}$ of a random matrix $A\in\eS_m^v$.  Its Fock space is  the space $L^2 (\eS_m, \bGamma_{v,v})$  and admits an orthogonal  chaos decomposition,
\[
L^2(\eS_m^v)=\overline{\bigoplus_{q\geq 0} L^2(\eS_m^v)_q}.  
\]
We recall that 
\[
\widehat{\eP}_{q,v}:=\bigoplus_{k=0}^q L^2(\eS_m^v)_k
\]
is the subspace of $L^2(\eS^v_m)$ spanned by  polynomials  of degree $\leq n$ in the entries of $A\in\eS^v_m$, and $L^2(\eS_m^v)_q$ is the orthogonal complement of $\widehat{\eP}_{q-1,v}$ in $\widehat{\eP}_{q,v}$.   The  summand $L^2(\eS_m^v)_q$ is called the \emph{$q$-th chaos component} of $L^2(\eS_m^v,)$.  

The chaos  decomposition construction is equivariant with respect to the action of $O(m)$ on $\eS_m^{v}$. In particular, the chaos components $L^2(\eS_m^v)_k$ are $O(m)$-invariant subspaces. If we denote by $L^2(\eS_m^v)^{inv}$ the subspace  of $L^2(\eS_m^v)$  consisting of $O(m)$-invariant functions, then   we obtain an orthogonal decomposition
\begin{equation}\label{chaos}
L^2\bigl(\, \eS_m^v\,\bigr)^{inv}=\overline{\bigoplus_{k\geq 0} L^2(\eS_m^v)_k^{inv}},
\end{equation}
where $L^2(\eS_m^v)_k^{inv}$ consists of the subspace of $L^2\bigl(\, \eS_m^v\,\bigr)_k$ where $O(m)$ acts trivially. In particular, we deduce that  $L^2(\eS_m^v)_k^{inv}$ consists of polynomials in the variables  $\tr A,\tr A^2,\dotsc,\tr A^m$.

We define the $O(m)$-invariant functions
\[
p,q, f:\eS_m\to\bR,\;\;p(A)=(\tr A)^2,\;\;q(A):=\tr A^2,\;\;f(A)=|\det A|.
\]
A basis of  $\widehat{\eP}_{2,v}^{inv}$ is given by the polynomials $1$, $\tr A$,  $p(A)$, $q(A)$. Clearly, since  $\tr A$ is an odd function of $A$, it is  orthogonal to the even polynomials $1,p(A),q(A)$. We have
\begin{equation}\label{1f1}
\bsE_{\eS_m^v}\bigl[\,p(A) \,\bigr]=\sum_{i=1}^n \bsE[a_{ii}^2]+2\sum_{i<j}\bsE[a_{ii}a_{jj}]=3mv+m(m-1)v=m(m+2)v.
\end{equation}
We have
\begin{equation}\label{1f2}
\bsE_{\eS_m^v}[q(A)]=\sum_{i=1}^m \bsE[a_{ii}^2]+2\sum_{i<j}\bsE[a_{ij}^2]= 3m+ m(m-1)=m(m+2)v.
\end{equation}
We deduce that the polynomials
\begin{equation}\label{barpq}
\begin{split}
\bar{p}(A)=p(A)-\bsE_{\eS_m^v}[p(A)]= p(A)-m(m+2)v,\\
\bar{q}(A)=q(A)-\bsE_{\eS_m^v}[q_A)]=q(A)-m(m+2)v
\end{split}
\end{equation}
form a (non-orthonormal)  basis of  $L^2(\eS_m^v)_2^{inv}$.   

\subsection{Hermite decomposition of $Z(T)$. }\label{ss: 32}  For $\ve>0$ define
\[
\delta_\ve:\bR^m\to\bR,\;\;\delta_\ve=(2\ve)^{-m}\bsI_{[-\ve,\ve]^m}.
\]
Observe that  the family $(\delta_{\ve})$ approximates  the Dirac distribution $\delta_0$ on $\bR^m$ as $\ve\searrow 0$. We recall  \cite[Prop. 1.2]{EL}  which applies with  no change to the setup in this  paper.

\begin{proposition}[Estrade-Le\'{o}n]\label{prop: zl2}  The random variable $Z(T)$ belongs to $L^2(\Omega)$. Moreover
\[
Z(T)=\lim_{\ve\searrow 0} \int_T \bigl|\,\det \nabla^2 X(\bt)\,\bigr| \delta_{\ve}\bigl(\,\nabla X(\bt)\,\bigr) d\bt
\]
almost surely and in $L^2(\Omega)$.\qed
\end{proposition}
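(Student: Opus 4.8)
The plan is to establish this via the classical Kac–Rice machinery combined with a dominated-convergence argument, following the Estrade–León template. First I would note that the almost sure identity
\[
Z(T)=\lim_{\ve\searrow 0}\int_T\bigl|\det\nabla^2 X(\bt)\bigr|\,\delta_\ve\bigl(\nabla X(\bt)\bigr)\,d\bt
\]
is essentially an exercise in calculus on each sample path: since $X$ is a.s.\ Morse on $T$ (by \cite[Thm.~11.3.1]{AT}), its critical points in $T$ are finite in number, isolated, nondegenerate, and none lie on $\partial T$. Around each nondegenerate critical point $\bt_0$, the gradient map $\nabla X$ is a local diffeomorphism, so for $\ve$ small the integral of $\delta_\ve(\nabla X(\bt))$ over a small neighbourhood of $\bt_0$ computes, by the change-of-variables formula, exactly $\bigl|\det\nabla^2 X(\bt_0)\bigr|^{-1}$ times the mass of $\delta_\ve$ in a shrinking neighbourhood of $0$, which tends to $\bigl|\det\nabla^2 X(\bt_0)\bigr|^{-1}$; multiplying by the factor $\bigl|\det\nabla^2 X(\bt)\bigr|$ in the integrand and using continuity of $\nabla^2 X$ recovers the contribution $1$ from each critical point, while away from the critical set $\nabla X$ is bounded away from $0$ on the compact remainder and the integrand vanishes for small $\ve$.

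The substantial part is upgrading this to convergence in $L^2(\Omega)$ and, simultaneously, showing $Z(T)\in L^2(\Omega)$. The standard route is to prove that the family
\[
Y_\ve:=\int_T\bigl|\det\nabla^2 X(\bt)\bigr|\,\delta_\ve\bigl(\nabla X(\bt)\bigr)\,d\bt
\]
is uniformly integrable in $L^2$, equivalently that $\sup_{\ve}\bsE[Y_\ve^2]<\infty$, together with a.s.\ convergence, which yields $L^1$ convergence; then a uniform bound on a slightly higher moment (say $\bsE[Y_\ve^{2+\eta}]$, or direct estimation of $\bsE[Y_\ve^2]$ and $\bsE[Y_\ve Z(T)]$) upgrades this to $L^2$. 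Writing $\bsE[Y_\ve^2]$ as a double integral over $T\times T$ and using the Gaussian (Kac–Rice) representation, one gets
\[
\bsE[Y_\ve^2]=\int_{T\times T}\bsE\Bigl[\bigl|\det\nabla^2 X(\bt)\bigr|\bigl|\det\nabla^2 X(\bs)\bigr|\;\Big|\;\nabla X(\bt)=\nabla X(\bs)=0\Bigr]\,p_{\ve,\ve}(\bt,\bs)\,d\bt\,d\bs + o(1),
\]
where $p_{\ve,\ve}$ approximates the joint density of $(\nabla X(\bt),\nabla X(\bs))$ at $(0,0)$. The conditional expectation of the product of Hessian determinants is bounded using that the joint law of $\bigl(\nabla^2 X(\bt),\nabla^2 X(\bs)\bigr)$ has moments controlled by $\psi$ (Assumption A2) and is, by A3, nondegenerate; finiteness of the integral over the diagonal neighbourhood is exactly where one needs a quantitative lower bound on $\det\Cov(\nabla X(\bt),\nabla X(\bs))$ as $\bs\to\bt$, which is supplied by the nondegeneracy of $J_2(X(0))$ — itself a consequence of A3 via Proposition~\ref{prop: j2}. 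The key identity that makes the near-diagonal integrand integrable is that, conditionally on $\nabla X(\bt)=0$, the vector $\nabla X(\bs)-\nabla X(\bt)$ has variance of order $|\bs-\bt|^2$, so the Gaussian density factor $p_{\ve,\ve}(\bt,\bs)$ is, after the $\ve\to 0$ limit, of order $|\bs-\bt|^{-m}$, which is integrable in $m$ variables precisely because of the companion vanishing of the conditional expectation of $|\det\nabla^2 X(\bs)|$ restricted to $\nabla X(\bs)=0$ near a point where $\nabla X(\bt)=0$.

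The main obstacle I anticipate is the near-diagonal analysis: controlling the singularity of the Kac–Rice kernel at $\bt=\bs$ and verifying that the integrability is genuinely borrowed from Assumptions A1–A3 (in particular the a.s.\ $C^3$ regularity, which gives enough Taylor control on the gradient near a critical point, and A3, which gives uniform nondegeneracy of the relevant Gaussian vectors). However, since the excerpt explicitly quotes this as \cite[Prop.~1.2]{EL} and remarks that it ``applies with no change to the setup in this paper,'' the cleanest write-up is simply to cite Estrade–León after checking that their hypotheses — a.s.\ $C^1$ (here even $C^3$), A2-type decay of the covariance derivatives, and nondegeneracy of $J_2(X(0))$ — all hold here: the first two are immediate from A1 and A2, and the third follows from A3 by Proposition~\ref{prop: j2}. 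Thus the proof reduces to a verification of hypotheses plus the elementary a.s.\ pathwise computation sketched above, with the hard analytic estimates imported wholesale from \cite{EL}.
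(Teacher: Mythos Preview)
Your proposal is correct and matches the paper's own treatment: the paper gives no independent proof of this proposition but simply cites \cite[Prop.~1.2]{EL} as applying ``with no change to the setup in this paper,'' which is exactly what you conclude after your (accurate) sketch of the Estrade--Le\'{o}n argument and your verification that Assumptions {\bf A1}--{\bf A3} supply the needed hypotheses (including the nondegeneracy of $J_2(X(0))$ via Proposition~\ref{prop: j2}). Your additional outline of the pathwise a.s.\ limit and the near-diagonal second-moment analysis is more than the paper provides, but it is consistent with the underlying argument in \cite{EL}.
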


The above nontrivial  result implies that the random variable $Z(T)$ has  finite variance and  admits a  chaos decomposition  as elaborated for example in \cite{Jan, Maj, Nua}. 

Recall that an orthogonal basis of $L^2\bigl(\, \bR,\bgamma_1(dx)\,\bigr)$ is given   by  the Hermite polynomials, \cite[Ex. 3.18]{Jan},  \cite[V.1.3]{Mal},
\begin{equation}\label{hermn}
H_n(x) :=(-1)^n e^{\frac{x^2}{2}} \frac{d^n}{dx^n}\Bigl(\, e^{-\frac{x^2}{2}}\,\Bigr)= n!\sum_{r=0}^{\lfloor\frac{n}{2}\rfloor} \frac{(-1)^r}{2^r r!(n-2r)!} x^{n-2r}.
\end{equation}
In particular
\begin{equation}\label{hn0}
H_n(0)=\begin{cases}
0, & n\equiv 1\bmod 2,\\
(-1)^r\frac{(2r)!}{2^rr!}, & n=2r.
\end{cases}
\end{equation}
For every multi-index  $\alpha=(\alpha_1, \alpha_2,\dotsc )\in\bN_0^{\bN}$ such that    all but finitely many $\alpha_k$-s are nonzero, and any
\[
\underline{x}=(x_1,x_2,\dotsc)\in\bR^{\bN}
\]
 we set
\[
|\alpha|:=\sum_k\alpha_k,\;\;\alpha!:=\prod_k \alpha_k!,\;\;H_\alpha(\underline{x}):=\prod_k H_{\alpha_k}(x_k).
\]
To simplify the notation  we set
\[
U(\bt):=\frac{1}{\sqrt{d_m}}\nabla X(\bt),\;\;A(\bt):=\nabla^2 X(\bt).
\]
Thus $U(\bt)$ is  a $\bR^m$-valued  Gaussian random vector  with covariance  matrix $\one_m$ while $A(t)$ is a Gaussian  random symmetric matrix in the ensemble $\eS_m^{h_m}$.

Recall that  $f(A)=|\det A|$. We have $f\in L^2(\eS_m^{h_m})^{inv}$ and we denote by $f_n(A)$ the component of  $f(A)$ in the $n$-th chaos  summand of the chaos decomposition (\ref{chaos}). Since $f$ is an even function we deduce that $f_n(A)=0$ for odd $n$.  Note also that
\[
f_0(A)=\bsE_{\eS_m^{h_m}}\bigl[\,|\det A|\,\bigr]\neq 0.
\]
Following \cite[Eq.(5)]{EL}   we define for every $\alpha\in\bN_0^m$ the quantity
\begin{equation}\label{dalph}
d(\alpha):=\frac{1}{\alpha!}(2\pi d_m)^{-\frac{m}{2}}  H_\alpha(0).
\end{equation}
Arguing exactly as in the proof of \cite[Prop. 1.3]{EL} we deduce the following result.
\begin{proposition}\label{prop: el1.3}
The following expansion holds in $L^2(\Omega)$
\[
Z(T)=\sum_{q=0}^\infty Z_q(T),
\]
where
\[
Z_q(T)=P_qZ(T)=\sum_{\substack{\alpha\in \bN_0^m, \;n\in\bN_0,\\|\alpha|+n=q}}d(\alpha)\int_T H_\alpha\bigl(\, U(\bt)\,\bigr)f_n\bigl(\, A(\bt)\,\bigr) d\bt. \proofend
\]
\end{proposition}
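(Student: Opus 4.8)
The plan is to follow the argument of \cite[Prop.~1.3]{EL} with Proposition~\ref{prop: zl2} as the point of departure. Write $Z_\ve(T):=\int_T|\det\nabla^2X(\bt)|\,\delta_\ve(\nabla X(\bt))\,d\bt$, so that $Z(T)=L^2$-$\lim_{\ve\searrow 0}Z_\ve(T)$; since each chaos projection $P_q$ is a bounded operator, it suffices to identify $\lim_{\ve\searrow 0}P_qZ_\ve(T)$ and then observe that $\sum_q P_qZ(T)=Z(T)$ because $Z(T)\in L^2(\Omega)$.

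First I would rewrite the integrand of $Z_\ve(T)$ in terms of the normalized gradient $U(\bt)=d_m^{-1/2}\nabla X(\bt)$, which by (\ref{covx2}) is a standard Gaussian vector, and the Hessian $A(\bt)=\nabla^2X(\bt)\in\eS_m^{h_m}$. By (\ref{covx1}) the vector $\nabla X(\bt)$ and the matrix $A(\bt)$ are uncorrelated, hence independent, at each fixed $\bt$. Expand the bounded function $\delta_\ve(\nabla X(\bt))=\delta_\ve\bigl(d_m^{1/2}U(\bt)\bigr)$ in the Hermite basis of $L^2(\bR^m)$ with respect to the standard Gaussian: its coefficient attached to the multi-index $\alpha$ is $c_\alpha(\ve)=\frac{1}{\alpha!}\bsE\bigl[\delta_\ve(d_m^{1/2}U(0))H_\alpha(U(0))\bigr]$, and a short computation — using that $\delta_\ve(d_m^{1/2}\,\cdot\,)$ is a Lebesgue approximate identity of total mass $d_m^{-m/2}$ supported in a fixed compact set for $\ve\le 1$, together with (\ref{hn0}) — gives $c_\alpha(\ve)\to\frac{1}{\alpha!}(2\pi d_m)^{-m/2}H_\alpha(0)=d(\alpha)$ as $\ve\searrow 0$. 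Simultaneously expand $|\det A(\bt)|=\sum_n f_n(A(\bt))$ in the chaos decomposition (\ref{chaos}) of $L^2(\eS_m^{h_m})$.

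Next, combine the two expansions. Because $U(\bt)$ and $A(\bt)$ span independent finite-dimensional Gaussian subspaces, the product of an element of the $n$-th homogeneous chaos of the Hessian space and $H_\alpha(U(\bt))$ (an element of the $|\alpha|$-th homogeneous chaos of the gradient space) lies in the $(n+|\alpha|)$-th chaos of $L^2(\Omega)$, by the standard additivity of chaos degree over independent Gaussian spaces \cite{Jan}. Moreover the double series $|\det A(\bt)|\,\delta_\ve(\nabla X(\bt))=\sum_{n,\alpha}c_\alpha(\ve)H_\alpha(U(\bt))f_n(A(\bt))$ converges in $L^2(\Omega)$, since the summands are pairwise orthogonal with squared norms multiplying out to $\bsE[|\det A|^2]\,\bsE[\delta_\ve(d_m^{1/2}U)^2]<\infty$. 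Hence, for each $q$,
\[
P_q\bigl(\,|\det A(\bt)|\,\delta_\ve(\nabla X(\bt))\,\bigr)=\sum_{\substack{\alpha\in\bN_0^m,\ n\in\bN_0\\ |\alpha|+n=q}}c_\alpha(\ve)\,H_\alpha(U(\bt))\,f_n(A(\bt)).
\]
Now $\bt\mapsto|\det A(\bt)|\,\delta_\ve(\nabla X(\bt))$ is an $L^2(\Omega)$-valued map whose norm is independent of $\bt$ by stationarity, hence Bochner integrable over the bounded set $T$; applying the bounded operator $P_q$ under the integral and then letting $\ve\searrow 0$ (for fixed $q$ this is a finite sum, $c_\alpha(\ve)\to d(\alpha)$, and each $\int_T H_\alpha(U(\bt))f_n(A(\bt))\,d\bt$ is a fixed element of $L^2(\Omega)$) yields $P_qZ(T)=\sum_{|\alpha|+n=q}d(\alpha)\int_T H_\alpha(U(\bt))f_n(A(\bt))\,d\bt$, which is the asserted formula.

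The main obstacle is the bookkeeping in the last two steps: one must check that the double series in $(n,\alpha)$ truly converges in $L^2(\Omega)$ and may be regrouped into homogeneous chaoses before integrating over $T$, and that the $\ve\searrow 0$ limit legitimately commutes with both the finite chaos sum and the Bochner integral — all of which rest on the uniform-in-$\bt$ $L^2$ bounds supplied by stationarity together with the $L^2$ convergence of Proposition~\ref{prop: zl2}. The remaining ingredients (independence of $\nabla X(\bt)$ and $\nabla^2X(\bt)$, the values of $H_n(0)$, and the additivity of chaos degrees for independent Gaussian spaces) are standard, exactly as in \cite{EL}.
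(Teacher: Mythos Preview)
Your proposal is correct and follows essentially the same approach as the paper, which simply cites \cite[Prop.~1.3]{EL}: you have faithfully reconstructed that argument (approximation by $Z_\ve(T)$, Hermite expansion of $\delta_\ve$, chaos expansion of $|\det A|$, independence of $U(\bt)$ and $A(\bt)$ to split the chaos degree, and passage to the limit in each fixed finite chaos). The bookkeeping concerns you flag in your last paragraph are exactly the points one must check, and your justifications --- orthogonality and finiteness of the $q$-level sums, stationarity for the uniform $L^2$ bound, and continuity of $P_q$ for the $\ve\searrow 0$ interchange --- are the standard ones used in \cite{EL}.
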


 Observe that the expected number of critical points of $X$ on $T$ is given by
\[
\bsE\bigl[\, Z(T)\,\bigr]=  \bsE\bigl[\, Z(T)\,\bigr] =d(0)\int_T \bsE\bigl[ H_0\bigl(\,U(\bt)\,\bigr) f_0\bigl(\,A(\bt)\,\bigr)\,\bigr] d\bt
\]
(use the stationarity of $X(\bt)$)
\[
=(2\pi d_m)^{-\frac{m}{2}} \int_T \bsE_{\eS_m^{h_m}}\bigl[\, f(A(\bt))\,\bigr] d\bt =(2\pi d_m)^{-\frac{m}{2}}\bsE_{\eS_m^{h_m}}\bigl[\,|\det A|\,\bigr]\vol(T)
\]
\[
=\left(\frac{h_m}{2\pi d_m}\right)^{\frac{m}{2}}\bsE_{\eS_m^1}\bigl[\, |\det A|\,\bigr]\vol(T).
\]
This proves (\ref{expect}) and (\ref{cmw}).

\subsection{Asymptotic variance of $Z(T)$.}\label{ss: 33}   Denote by $C_N$ the cube $[-N,N]^m$ and  by $B$ the cube $[0,1]^m$. For any Borel measurable subset $S\subset \bR^m$ such that $\vol (S)\neq 0$ we set
\[
\zeta(S):=\frac{1}{\sqrt{\vol(S)}}\bigl(\, Z(S)-\bsE\bigl[\, Z(S)\,\bigr]\,\bigr).
\]
Thus, $\zeta_N=\zeta(C_N)$. Since $\zeta_N\in L^2(\Omega)$ we deduce $\var(\zeta_N)<\infty$. 

\begin{proposition}\label{prop: el21}  There exists $V_\infty\in (0,\infty)$ such that
\[
\lim_{N\to\infty}\var(\zeta_N) =V_\infty.
\]
\end{proposition}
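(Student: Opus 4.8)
The strategy is the standard one for Breuer–Major type variance computations: reduce the variance of $\zeta_N$ to a sum over chaos components, express each component's variance as a convolution-type integral over $\bR^m$, and pass to the limit using the integrability hypotheses on the spectral density. First I would use the chaos decomposition of Proposition \ref{prop: el1.3}, writing $Z_q(C_N) = P_q Z(C_N)$, so that by orthogonality of chaoses
\[
\var(\zeta_N) = \sum_{q\geq 2} \var\bigl(\zeta_{N,q}\bigr),\qquad \zeta_{N,q} = \frac{1}{(2N)^{m/2}} Z_q(C_N),
\]
the $q=0$ term being the mean and the $q=1$ term vanishing because $f_n=0$ for odd $n$ and $H_\alpha(0)=0$ for $|\alpha|$ odd, which kills all total degree one contributions. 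So the sum really starts at $q=2$.

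\textbf{Per-chaos variance as a covariance integral.} For fixed $q$, expand $Z_q(C_N)$ using Proposition \ref{prop: el1.3} and compute $\var(Z_q(C_N))$ as a double integral over $C_N\times C_N$ of a covariance function depending only on $\bt-\bs$ (here isotropy, or at least stationarity, is used). Concretely, the integrand is a sum over pairs of multi-indices and chaos levels of terms of the form $d(\alpha)d(\beta)\,\bsE\bigl[H_\alpha(U(\bt))f_n(A(\bt))\,H_\beta(U(\bs))f_{n'}(A(\bs))\bigr]$, and by the diagram/product formula for Hermite functionals this expectation is a polynomial in the entries of the covariance matrix of the Gaussian vector $(U(\bt),A(\bt),U(\bs),A(\bs))$, all of whose "cross" entries are derivatives of $C$ evaluated at $\bt-\bs$ of order $\leq 4$, hence bounded by $\psi(\bt-\bs)$ up to constants. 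Changing variables to $\btau = \bt-\bs$ and using $\frac{1}{(2N)^m}\int_{C_N}\int_{C_N}\phi(\bt-\bs)\,d\bt\,d\bs \to \int_{\bR^m}\phi(\btau)\,d\btau$ (valid whenever $\phi\in L^1$), one gets
\[
V_{q,\infty} := \lim_{N\to\infty}\var(\zeta_{N,q}) = \int_{\bR^m} G_q(\btau)\,d\btau
\]
for an explicit $G_q$; this limit exists and is finite because $G_q$ is dominated by a fixed power of $\psi$, and $\psi\in L^q(\bR^m)$ for all $q>0$ by the Remark following Assumption A3. This is exactly the content of (i) in the outline, and gives the formula quoted as (\ref{vq2}) in Subsection \ref{ss: 33}.

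\textbf{Summability and positivity.} To conclude $\var(\zeta_N)\to V_\infty := \sum_{q\geq 2} V_{q,\infty} \in (0,\infty)$, two things remain. For finiteness of the sum and for interchanging $\lim_N$ with $\sum_q$, I would establish a uniform-in-$N$ tail bound $\sup_N \sum_{q\geq Q}\var(\zeta_{N,q}) \to 0$ as $Q\to\infty$ (item (ii) of the outline, i.e. Lemma \ref{lemma: VQ}); this comes from a geometric-type decay in $q$ of the contributions, obtained by bounding $\|f_n\|_{L^2(\eS_m^{h_m})}$ and the Hermite coefficients and exploiting that the covariance entries, being given by $\psi$, are small where $\psi$ is small and that the $L^2$-norm of the $q$-th chaos component of the relevant $L^2$ function tends to zero. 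Granting (ii), dominated convergence over the discrete variable $q$ gives $\var(\zeta_N)\to\sum_{q\geq 2}V_{q,\infty}$, which is finite. The remaining point, $V_\infty>0$, I would get by showing the single term $V_{2,\infty}>0$: the second-chaos component of $Z(T)$ is built from $\bar p$, $\bar q$ (from the matrix side) and the degree-two Hermite terms $H_2$, $H_\alpha$ with $|\alpha|=2$ (from the gradient side), and $V_{2,\infty}$ is a quadratic form in the Fourier/spectral data that is manifestly a nonnegative integral; one checks it is not identically zero using that $w$ is nontrivial and continuous (so the relevant spectral integrals like $s_m,d_m,h_m$ are strictly positive and the integrand of $G_2$ is a genuine square, not a degenerate one). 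The main obstacle is this last positivity claim: the vanishing of $V_{2,\infty}$ would require a delicate cancellation among the several second-order terms, and ruling it out requires identifying $G_2$ with a sum of squares of functions whose Fourier transforms cannot all vanish — this is where one must work, and it is the step that genuinely uses that $X$ is a non-degenerate isotropic field rather than a formal manipulation; the rest is bookkeeping with Hermite expansions and the $L^1\cap L^2$ hypotheses on $|\blam|^4 w$.
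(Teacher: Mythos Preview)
Your outline is correct and matches the paper's approach essentially step for step: chaos decomposition, per-chaos covariance integrals converging via dominated convergence (the paper's (\ref{vq2})), uniform tail control in $q$ (Lemma~\ref{lemma: VQ}), and positivity via $V_{2,\infty}>0$. The two places where you hand-wave are exactly where the paper does the real work: the tail bound is obtained by a near/far splitting together with Arcones' inequality to get the geometric decay $K^q\psi^q$ on the far region, and the positivity of $V_{2,\infty}$ is shown by an explicit computation that collapses the second-chaos integrand to a single $L^2$-norm $d(0)^2\bigl\Vert 3h_m(x+y)\bigl(G_1+\tfrac{2}{3}G_2\bigr)+zG_0\bigr\Vert_{L^2}^2$ with $z=-\tfrac{1}{2}\bsE[|\det A|]\neq 0$ and $G_0,\,G_1+\tfrac{2}{3}G_2$ linearly independent.
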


\begin{proof} To prove that the above limit exists we follow closely the  proof of \cite[Prop. 2.1]{EL}. We have
\[
\zeta_N=\zeta(C_N)=(2N)^{-\frac{m}{2}}\sum_{q\geq 1} Z_q(C_N),
\]
\[
V_N:=\var\bigl(\,\zeta(C_N)\,\bigr)=\sum_{q\geq 1}\; \underbrace{(2N)^{-m}\bsE\bigl[ Z_q(C_N)^2\,\bigr]}_{=:V_{q,N}}.
\]
To estimate $V_{q,N}$   we write
\[
Z_q(T)=\int_T \rho_q(\bt) d\bt,
\]
where
\begin{equation}\label{rhoq}
\rho_q(\bt)= \sum_{\substack{\alpha\in \bN_0^m, \,n\in\bN_0\\|\alpha|+n=q}}d(\alpha) H_\alpha\bigl(\, U(\bt)\,\bigr)f_n\bigl(\, A(\bt)\,\bigr).
\end{equation}
Then
\[
V_{q,N}=(2N)^{-m}\int_{C_N\times C_N} \bsE\bigl[ \,\rho_q(\bs)\rho_q(\bt)\,\bigr] d\bs d\bt
\]
(use the stationarity of $X(t)$)
\[
=(2N)^{-m}\int_{C_N\times C_N} \bsE\bigl[ \,\rho_q(0)\rho_q(\bt-\bs)\,\bigr] d\bs d\bt
\]
\[
=(2N)^{-m}\int_{C_{2N}} \bsE\bigl[ \,\rho_q(0)\rho_q(\bu)\,\bigr]\prod_{k=1}^m\left(1-\frac{|u_k|}{2N}\right) d\bu.
\]
The last equality is  obtained by integrating along the fibers of the  map 
\[
C_N\times C_N\ni (\bs,\bt)\mapsto \bt-\bs\in C_{2N}.
\]
To estimate the last integral, we fix an orthonormal basis  $(b_{ij})_{1\leq i\leq j\leq m}$ of the Gaussian Hilbert  space $\Hom(\eS_n^{h_m},\bR)$.   We denote   by $B$ the vector  $(b_{ij})_{1\leq i\leq j\leq m}$, by $A$ the vector  $(a_{ij})_{1\leq i\leq j\leq m}$ both viewed as column vectors of size 
\[
\nu(m):=\dim\eS_m=\frac{m(m+1)}{2}.
\]
There exists a nondegenerate  deterministic matrix $\Lambda$ of size $\nu(m)\times \nu(m)$, relating $A$ and  $B$, $A=\Lambda B$. We now have  an orthogonal decomposition
\[
f_n(A)=\sum_{\beta\in \bN_0^{\nu(m)},\;|\beta|=n} c(\beta)H_\beta(B).
\]
Let us set
\[
\eI_m:=\bigl(\bN_0^m\bigr)\times  \bigl( \bN_0^{\nu(m)}\bigr).
\]
We deduce
\[
\rho_q(\bt)= \sum_{(\alpha,\beta)\in\eI_m} d(\alpha)c(\beta) H_\alpha(\,U(t)\,) c(\beta)H_\beta(\,B(\bt)\,).
\]
We can further simplify  this formula   if we introduce the vector 
\[
Y(\bt):=\bigl(\,U(\bt), B(\bt)\,\bigr),\;\;B(t)=\Lambda^{-1}A(\bt).
\]
 For $\gamma=(\alpha,\beta)\in \eI_m$ we set 
\[
\ba(\gamma):=d(\alpha)c(\beta),\;\;H_\gamma\bigl(\,Y(\bt)\,\bigr):=H_\alpha\bigl(\, U(\bt)\,\bigr)H_\beta\bigl(\,B(\bt)\,\bigr).
\]
 Then
\begin{equation}\label{rhoq0}
\rho_q(\bt)=\sum_{\gamma\in\eI_m,\;|\gamma|=q} \ba(\gamma) H_\gamma\bigl(\,Y(\bt)\,\bigr),
\end{equation}
\[
\bsE\bigl[ \,\rho_q(0)\rho_q(u)\,\bigr]=\sum_{\substack{\gamma,\gamma'\in\eI_m\\ |\gamma|=|\gamma'|=q}} \ba(\gamma)\ba(\gamma')\bsE\bigl[\,H_\gamma(\,Y(0)\,) H_{\gamma'}(\,Y(\bu)\,)\,\bigr].
\]
We set  $\omega(m):=m+\nu(m)$, and we denote by $Y_i(\bt)$, $1\leq i\leq  \omega(m)$, the components of $Y(\bt)$ labelled so  that $Y_i(\bt)=U_i(\bt)$, $\forall 1\leq i\leq m$.  For $\bu\in\bR^m$  and $1\leq i,j\leq \omega(m)$ we define the covariances
\[
\Gamma_{ij}(\bu):=\bsE\bigl[\,Y_i(0)Y_j(\bu)\,\bigr].
\]
Observe that there exists a positive constant $K$ such that
\begin{equation}\label{psik}
\bigl|\,\Gamma_{i,j}(\bu)\,\bigr|\leq K\psi(\bu),\;\;\forall i,j=1,\dotsc, \omega(m),\;\;\bu\in\bR^m,
\end{equation}
where $\psi$ is the function defined in (\ref{psi}).

Using the Diagram Formula (see e.g.\cite[Cor. 5.5]{Maj} or \cite[Thm. 7.33]{Jan}) we deduce that  for any $\gamma,\gamma'\in\eI_m$ such that $|\gamma|=|\gamma'|=q$ there exists a universal homogeneous polynomial of degree $q$, $P_{\gamma,\gamma'}$ in the variables $\Gamma_{ij}(\bu)$ such that
\[
\bsE\bigl[\,H_\gamma(\,Y(0)\,) H_{\gamma'}(\,Y(\bu)\,)\,\bigr]=P_{\gamma,\gamma'}\bigl(\,\Gamma_{ij}(\bu)\,\bigr).
\]
Hence
\begin{equation}\label{vqn1}
V_{q,N}=(2N)^{-m}\sum_{\substack{\gamma,\gamma'\in\eI_m\\
|\gamma|=|\gamma'|=q}} \ba(\gamma)\ba(\gamma')\;\underbrace{\int_{C_{2N}}P_{\gamma,\gamma'}\bigl(\,\Gamma_{ij}(\bu)\,\bigr) \prod_{k=1}^m\left(1-\frac{|u_k|}{2N}\right) d\bu}_{=:R_N(\bgamma,\bgamma')}.
\end{equation}

From (\ref{psik}) we deduce that   for any $\gamma,\gamma'\in\eI_m$ such that $|\gamma|=|\gamma'|=q$ there exists a constant $C_{\gamma,\gamma'}>0$ such that
\[
\bigl|\, P_{\gamma,\gamma'}\bigl(\,\Gamma_{ij}(\bu)\,\bigr) \,\bigr|\leq C_{\gamma,\gamma'}\psi(\bu)^q,\;\;\forall \bu\in\bR^m.
\]
Since $\psi\in L^p(\bR^m)$, $\forall p>1$, we deduce   from the dominated convergence theorem that
\begin{equation}\label{rnlim}
\lim_{N\to\infty} R_N(\bgamma,\bgamma')=R_\infty(\gamma,\gamma'):= \int_{\bR^m}P_{\gamma,\gamma'}\bigl(\,\Gamma_{ij}(\bu)\,\bigr) d\bu, 
\end{equation}
and thus
\begin{equation}\label{vq2}
\lim_{N\to\infty} V_{q,N}=V_{q,\infty}:= \sum_{\substack{\gamma,\gamma'\in\eI_m\\
|\gamma|=|\gamma'|=q}} \ba(\gamma)\ba(\gamma') R_\infty(\gamma,\gamma')=\int_{\bR^m} \bsE\bigl[ \,\rho_q(0)\rho_q(\bu)\,\bigr]d\bu.
\end{equation}
Since $V_{q,N}\geq 0$, $\forall q, N$, we have
\[
V_{q,\infty}\geq 0,\;\;\forall q.
\]
\begin{lemma}\label{lemma: VQ} For any positive integer $Q$ we set
\[
V_{>Q, N}:=\sum_{q>Q}V_{q,N}.
\]
Then
\begin{equation}\label{supN}
\lim_{Q\to\infty} \bigl(\,\sup_NV_{>Q, N}\,\bigr) =0,
\end{equation}
the series 
\[
\sum_{q\geq 1}V_{q,\infty}
\]
is convergent and, if $V_\infty$ is its sum, then
\begin{equation}\label{vinfty}
V_\infty=\lim_{N\to\infty} V_N=\lim_{N\to\infty}\sum_{g\geq 1}  V_{q, N}.
\end{equation}
\end{lemma}

\begin{proof} For $\bs\in\bR^m$ we denote by $\theta_\bs$ the shift operator associated with the field  $X$, i.e.,
\[
\theta_\bs X(\bullet)= X(\bullet+\bs).
\] 
This extends to a unitary map $L^2(\Omega)\to L^2(\Omega)$  that commutes with the chaos decomposition of $L^2(\Omega)$. Moreover,  for any  parallelipiped $T$ we have
\[
Z(T+\bs)=\theta_\bs Z(T).
\]
If we denote by $\eL_N$ the set of lattice points
\[
\eL_N:=[-N,N)^m\cap\bZ^m
\]
then we deduce
\[
\zeta(C_N)=(2N)^{-m/2}\sum_{\bs\in\eL_m} \theta_\bs\zeta\bigl(B),\;\;B=[0,1]^m.
\]
We denote by $\eP_{>Q}$ the projection
\[
\eP_{>Q}=\sum_{q>Q}\eP_q,
\]
where we recall that $\eP_q$ denotes the projection on the $q$-th chaos component of $L^2(\Omega)$. We have
\[
\eP_{>Q}\zeta(C_N)=(2N)^{-m/2}\sum_{\bs\in\eL_m} \theta_\bs \eP_{>Q}\zeta\bigl(B).
\]
Using the stationarity of $X$ we deduce
\begin{equation}\label{V>Q}
V_{>Q, N}=\bsE\Bigl[\,\bigr|\eP_{>Q}\zeta(C_N)\bigr|^2\,\Bigr]=(2N)^{-m}\sum_{\bs\in\eL_{2N}}\nu(\bs, N)\bsE\bigl[\,\eP_{>Q}\zeta\bigl(B)\cdot\theta_\bs \eP_{>Q}\zeta\bigl(B)\,\bigr],
\end{equation}
where $\nu(\bs,N)$ denotes the number of lattice points $\bt\in\eL_N$ such that $\bt-\bs\in\eL_N$. Clearly
\begin{equation}\label{nubs}
\nu(\bs, N)\leq (2N)^m.
\end{equation}
With $K$ denoting the positive constant in (\ref{psik})  we choose  positive numbers $a,\rho$ such that
\[
\psi(\bs)\leq \rho<\frac{1}{K},\;\;\forall |\bs|_{\infty}>a.
\]
We split $V_{>Q,N}$ into two parts
\[
V_{>Q,N}=V'_{>Q,N}+V''_{>Q,N},
\]
where  $V'_{>Q,N}$ is made up of  the terms in (\ref{V>Q})  corresponding to  lattice points $\bs\in\eL_{2N}$ such that $|\bs|_\infty <a+1$,  while $V''_{>Q,N}$  corresponds to lattice points $\bs\in\eL_{2N}$ such that  $|\bs|_\infty\geq a+1$.

We  deduce from (\ref{nubs}) that for $2M>a+1$ we have
\[
\Bigr| \,V'_{>Q,N}\,\Bigr|\leq (2N)^{-m}(2a+2)^m  (2N)^m\bsE\Bigl[\, \bigl|\eP_{>Q}\zeta(B)\bigr|^2\,\Bigr].
\]
As $Q\to \infty$, the right-hand side of the above inequality goes to $0$ uniformly with respect to $N$.

To estimate $V''_{>Q,N}$ observe that for $\bs\in \eL_{2N}$ such that $|\bs|_\infty>a+1$ we have
\begin{equation}\label{P>Q}
\bsE\bigl[\,\eP_{>Q}\zeta(B)\cdot\theta_\bs \eP_{>Q}\zeta(B)\,\bigr]=\sum_{q>Q}\int_B\int_B\bsE\bigl[\,\rho_q(\bt)]\rho_q(\bu+\bs)\,\bigr] d\bt d\bu,
\end{equation}
where we recall from (\ref{rhoq0}) that
\[
\rho_q(\bt)=\sum_{\gamma\in\eI_m,\;|\gamma|=q} \ba(\gamma) H_\gamma\bigl(\,Y(\bt)\,\bigr),\;\;\eI_m:=\bN^m_0\times\bN_0^{\nu(m)},\;\;\nu(m)=\frac{m(m+1)}{2}.
\]
Thus
\[
\bsE\bigl[\,\rho_q(\bt)\rho_q(\bu+\bs)\,\bigr]=\bsE\Bigl[\,\Bigl(\sum_{\gamma\in\eI_m,\;|\gamma|=q} \ba(\gamma) H_\gamma\bigl(\,Y(\bt)\,\bigr)\,\Bigr)\Bigl(\, \sum_{\gamma\in\eI_m,\;|\gamma|=q} \ba(\gamma) H_\gamma\bigl(\,Y(\bs+\bu)\,\bigr)\,\Bigr)\Bigr]
\]
Arcones' inequality \cite[Lemma 1]{Arc} implies that
\begin{equation}\label{P>Q1}
\bsE\bigl[\,\rho_q(\bt)\rho_q(\bu+\bs)\,\bigr]\leq  K^q\psi(\bs+\bu-\bt)^q \sum_{\gamma\in\eI_m,\;|\gamma|=q} |\ba(\gamma)|^2\gamma!.
\end{equation}
The series $\sum_{\gamma\in\eI_m} |\ba(\gamma)|^2\gamma !$ is divergent because the series $\sum_{\gamma\in\eI_m} \ba(\gamma) H_\gamma(Y)$, $Y=(U,B)$, is the Hermite series decomposition of the distribution $\delta_0(\sqrt{d_m}U) |\det A|$.

On the other hand, for $\gamma=(\alpha,\beta)\in\eI_m$ we  have $\ba(\gamma)=d(\alpha) c(\beta)$, where, according to (\ref{dalph}) we have $d(\alpha)=\frac{1}{\alpha!}(2\pi d_m)^{-\frac{m}{2}}  H_\alpha(0)$. Recalling that
\[
H_{2r}(0)= (-1)^r\frac{(2r)!}{2^rr!},\;\;H_{2r+1}(0)=0.
\]
we deduce that
\[
 (2r)!\Bigl|\frac{1}{(2r)!}H_{2r}(0)\Bigr|^2=\frac{1}{2^{2r}}\binom{2r}{r}\leq 1, 
\]
and
\[
d(\alpha)^2\alpha!\leq C=\frac{1}{(2\pi d_m)^{m/2}}.
\]
This allows us to conclude that 
\[
 \sum_{\gamma\in\eI_m,\;|\gamma|=q} |\ba(\gamma)|^2\gamma!\leq (2\pi d_m)^{-m/2} q^m \sum_{\beta\in\bN_0^{\nu(m)},|\beta|\leq q} c(\beta)^2\beta!\leq C q^m\bsE_{\eS_m^{h_m}}\bigl[\,|\det A|^2\,\bigr].
\]
Using this in (\ref{P>Q}) and (\ref{P>Q1}) we deduce
\[
\bsE\bigl[\,\eP_{>Q}\zeta\bigl(B)\cdot\theta_\bs \eP_{>Q}\zeta\bigl(B)\,\bigr]
\]
\[
\leq \underbrace{C\bsE_{\eS_m^{h_m}}\bigl[\,|\det A|^2\,\bigr]}_{=:C'}\; \sum_{q>Q} q^mK^q\int_B \int_B\psi(\bs+\bu-\bt)^q d\bu d\bt
\]
Hence
\[
\bigl|\,V''_{>Q, N}\bigr|\leq C'\Bigl( \sum_{q>Q}  q^mK^q\rho^{q-1}\Bigr)\Bigl( \sum_{\bs\in\eL_{2N};\;|\bs|_\infty>a+1}\int_B\int_B \psi(\bs+\bu-\bt) d\bu d\bt\Bigr),
\]
where we have used  the fact that for $|s|_\infty\geq a+1$, $|\bu|,|\bt|\leq 1$  we have  $\psi(\bs+\bu-\bt)<\rho$. Since $\rho<\frac{1}{K}$, the sum 
\[
\sum_{q>Q}  q^mK^q\rho^{q-1}
\]
is the tail of a convergent power series.   On the other hand,
\[
\sum_{\bs\in\eL_{2N};\;|\bs|_\infty>a+1}\int_B\int_B \psi(\bs+\bu-\bt) d\bu d\bt\leq\sum_{\bs\in\eL_{2n}}\int_{[-1,1]^m}\psi(\bs+\bu)\leq 2\int_{\bR^m}\psi(\bu) d\bu<\infty.
\]
This proves   that $\sup_N|V''_{>Q, N}|$ goes to zero as $Q\to\infty$ and  completes the proof of  (\ref{supN}). The claim (\ref{vinfty}) follows immediately from (\ref{supN}).
\end{proof}

\begin{lemma} The asymptotic variance $V_\infty$ is positive. More precisely,
\begin{equation}\label{v2inf}
V_{2,\infty}>0.
\end{equation}
\end{lemma}

\begin{proof}  From (\ref{vq2}) we deduce
\begin{equation}
V_{2,\infty}=\int_{\bR^m} \bsE\bigl[ \,\rho_2(0)\rho_2(\bu)\,\bigr]d\bu,
\label{v2infty}
\end{equation}
where, according to (\ref{rhoq}) we have
\[
\rho_2(\bt)= \sum_{\substack{\alpha\in \bN_0^m, \,n\in\bN_0\\|\alpha|+n=2}}d(\alpha) H_\alpha\bigl(\, U(\bt)\,\bigr)f_n\bigl(\, A(\bt)\,\bigr).
\]
The second chaos decomposition  $f_2(A)$  is a linear combination of the polynomials $\bar{p}(A)$ and $\bar{q}(A)$ defined in (\ref{barpq}) where $v=h_m$.

In the above  sum  the only nontrivial terms correspond to $\alpha=0$ or  $\alpha=(2\delta_{i1},2\delta_{i2},\dotsc,2\delta_{im})$, $i=1,\dotsc,m$. In each of these latter cases we  deduce  from (\ref{hn0}) that 
\[
d(\alpha):=-\frac{1}{2}d(0),\;\;d(0)\stackrel{(\ref{dalph})}{=}(2\pi d_m)^{-\frac{m}{2}} ,
\]
and we conclude  that
\[
\rho_2(\bt)= d(0)\Bigl(f_2(\,A(\bt)\,) -\frac{f_0(A)}{2}\sum_{i=1}^m H_2(\,U_i(\bt)\,)\Bigr)
\]
\[
=d(0)\Bigl(x\bar{p}(A(\bt))+y\bar{q}(A(\bt)) -\frac{f_0(A)}{2}{\sum_{i=1}^m H_2(\,U_i(\bt)\,)}\Bigr).
\]
For uniformity  we set
\[
z:= -\frac{f_0(A)}{2}
\]
so that
\[
\rho_2(\bt)=d(0)\Bigl(\,  x\bar{p}(A)+y\bar{q}(A) +z\sum_{i=1}^m H_2(U_i)\,\Bigr).
\]
We first  express the polynomials  $\bar{p}(A)$  and $\bar{q}(A)$ in terms of Hermite polynomials. We set
\[
\ha_{ij}:=\begin{cases}
\frac{1}{\sqrt{3h_m}} a_{ii}, & i=j,\\
\frac{1}{\sqrt{h_m}} a_{ij}, & i\neq j.
\end{cases}
\]
We have
\[
(\tr A)^2 =3h_m \left(\sum_i \ha_{ii}\right)^2=3h_m\sum_i \ha_{ii}^2+6h_m\sum_{i<j}\ha_{ii}\ha_{jj}
\]
\[
=3h_m\sum_i\bigl(\,H_2(\ha_{ii})+1\,\bigr) +6h_m\sum_{i<j}H_1(\ha_{ii})H_1(\ha_{jj}).
\]
\[
\bar{p}(A)=(\tr A)^2 -m(m+2)h_m=h_m\left( 3\sum_i H_2(\ha_{ii})+ 6\sum_{i<j}H_1(\ha_{ii})H_1(\ha_{jj})-m(m-1)\,\right).
\]
We have
\[
\tr A^2 = 3h_m\sum_i \ha_{ii}^2 +2h_m\sum_{i<j} \ha_{ij}^2= 3h_m\sum_i H_2(\ha_{ii})+2h_m \sum_{i<j}H_2(\ha_{ij})+m(m+2)h_m,
\]
\[
\bar{q}(A)=h_m \left(\,3\sum_i H_2(\ha_{ii})+2 \sum_{i<j}H_2(\ha_{ij})\,\right).
\]
Define
\[
F_0(\bt)=\sum_iH_2\bigl(\,U_i(\bt)\,\bigr),\;\;F_1(\bt)=\sum_iH_2\bigl(\,\ha_{ii}(\bt)\,\bigr),\;\;F_2(\bt)=\sum_{i<j} H_2\bigl(\,\ha_{ij}(\bt)\,\bigr)
\]
\[
F_3(\bt)=\sum_{i<j} H_1\bigl( \, \ha_{ii}(\bt)\,\bigr) H_1\bigl(\, \ha_{jj}(\bt)\,\bigr).
\]
Thus
\[
\rho_2(\bt)= d(0)\Bigl(xh_m\,{\bigr(\,3F_1(\bt) +6 F_3(\bt)-m(m-1)\,\bigr)}+yh_m\,{\bigl(\, 3F_1(\bt)+2 F_2(\bt)\,\bigr)} +z F_0(\bt)\,\Bigr).
\]
\[
\bsE[F_1(0)]=\bsE[F_2(0)]=0,
\]
\[
\bsE[F_3(\bt)]=\bsE[F_3(0)]=\frac{m(m-1)}{2}\bsE[\ha_{11}(0)\ha_{22}(0)]= \frac{m(m-1)}{6}
\]
We set
\[
\widehat{F}_3(\bt)=F_3(\bt)-\bsE[F_3(\bt)].
\]
Then
\[
\bsE[ F_0(0)\widehat{F}_3(\bt)]=\bsE[ F_0(0)F_3(\bt)],\;\;\bsE[ F_1(0)\widehat{F}_3(\bt)]=\bsE[ F_1(0)F_3(\bt)],
\]
\[
\bsE[ F_2(0)\widehat{F}_3(\bt)]=\bsE[ F_2(0)F_3(\bt)],
\]
\begin{equation}\label{rho2herm}
\rho_2(\bt)=d(0)\Bigl(\, 3xh_m\,\underbrace{\bigr(\,F_1(\bt) +2 \widehat{F}_3(\bt))\,\bigr)}_{=:\eZ_1(\bt)}+yh_m\underbrace{\bigl(\, 3F_1(\bt)+2 F_2(\bt)\,\bigr)}_{=:\eZ_2(\bt)} +z F_0(\bt)\,\Bigr).
\end{equation}
To estimate  $ \bsE\bigl[ \,\rho_2(0)\rho_2(\bu)\,\bigr]$ we will rely on the following consequences of the  Diagram Formula \cite{BrMaj}, \cite[Thm. 3.12]{Jan}.

\begin{lemma}  Suppose that $X_1,X_2,X_3,X_4$ are  centered   Gaussian random variables such that
\[
\bsE[ X_i^2]=1,\;\;\bsE[X_iX_j]=c_{ij},\;\;\forall i, j=1,2,4,\;\;i\neq j.
\]
Then
\begin{subequations}\label{hdiag}
\begin{equation}
\bsE\bigl[\,H_1(X_1)H_1(X_2)\,\bigr]=c_{12},
\end{equation}
\begin{equation}
\bsE\bigl[ \,H_2(X_1) H_2(X_2)\,\bigr]=2c_{12}^2,\;\;\bsE\bigl[\,H_2[X_1]H_1(X_2)H_1(X_3)\,\bigr]=2c_{12}c_{13},
\end{equation}
\begin{equation}
\bsE\bigl[ H_1(X_1)H_1(X_2)H_1(X_3)H_1(X_4)\,\bigr]=c_{12}c_{34}+c_{13}c_{24}+c_{14}c_{23}.
\end{equation}
\end{subequations}\qed
\end{lemma}

To compute  the expectations involved in (\ref{v2infty}) we need to know the covariances between $\ha_{ij}(0)$ and $\ha_{jk}(\bt)$. These are  determined by  the covariance kernel 
\[
C(\bt)=\eF[\mu(\blam) ],
\]
where $\eF$ denotes the Fourier transform.  For any  $i_1,\dots, i_k\in\{1,\dotsc m\}$  we set
\[
C_{i_1\dotsc i_k}(\bt):=\pa^k_{t_{i_1}\dotsc t_{i_k}}C(\bt),\;\;\eF\bigl[\, (-\ii)^{k} \lambda_{i_1}\cdots \lambda_{i_k} \mu(\blam)\,\bigr].
\]
We have
\[
\bsE\bigl[\pa^k_{t_{i_1}\dotsc t_{i_k}}X(0)\pa^\ell_{t_{j_1}\dotsc t_{j_\ell}}X(\bt)\,\bigr]=(-1)^{k}C_{i_1\dotsc i_k j_1\dotsc j_\ell}(\bt)
\]
\[
U_i(\bt) =\frac{1}{\sqrt{d_m}}\pa_{t_i }X(t),\;\;\ha_{ii}(\bt)=\frac{1}{\sqrt{3h_m}} \pa^2_{t_i}X(\bt),\;\;\ha_{ij}(\bt)=\frac{1}{\sqrt{h_m}} \pa^2_{t_it_j} X(t).
\]
Recalling that the spectral measure $\mu(d\blam)$ has the form
\[
\mu(\blam)=w(|\blam|) d\blam,
\]
we introduce the functions
\[
M_{i_1\dotsc i_k}(\blam):=\lambda_{i_1}\cdots \lambda_{i_k} w(|\blam|)
\]
and denote by $\eF_{i_1\dotsc i_k}$ their Fourier transforms
\[
\eF_{i_1\dotsc i_k}(\bt)=(2\pi)^{-m/2}\int_{\bR^m}e^{-\ii(\bt,\blam)}M_{i_1\dotsc i_k}(\blam) d\blam.
\]
Then
\begin{subequations}\label{f}
\begin{equation}\label{a}
\bsE\bigl[ U_i(0)U_j(\bt)\,\bigr]=-\frac{1}{d_m} C_{ij}=\frac{1}{d_m}\eF_{ij}(t),
\end{equation}
\begin{equation}
\bsE\bigl[ U_i(0) \ha_{jj}(\bt)\bigl]=-\bsE\bigl[\ha_{jj}(0)U_i(\bt)]\\
=-\frac{1}{\sqrt{3h_md_m}} C_{ijj}(\bt)=\frac{\ii }{\sqrt{3h_md_m}} \eF_{ijj}(\bt),\;\;\forall i,j,
\end{equation}
\begin{equation}
\begin{split}
\bsE\bigl[ U_i(0) \ha_{jk}(\bt)\bigl]=-\bsE\bigl[\ha_{jk}(0)U_i(\bt)]
=-\frac{1}{\sqrt{h_md_m}} C_{ijk}(\bt)\\
=\frac{\ii}{\sqrt{h_md_m}} \eF_{ijk}(\bt)\;\;\forall i,j,k,\;\;j\neq k,
\end{split}
\end{equation}
\begin{equation}
\bsE\bigl[ \ha_{ii}(0)\ha_{jj}(\bt)\bigr]=\frac{1}{3h_m}C_{iijj}(\bt)=\frac{1}{3h_m}\eF_{iijj}(\bt),
\end{equation}
\begin{equation}
\begin{split}
\bsE\bigl[ \ha_{ii}(0)\ha_{jk}(\bt)\bigr]= \bsE\bigl[ \ha_{jk}(0)\ha_{ii}(\bt)\bigr]=\frac{1}{h_m\sqrt{3}}C_{iijk}(\bt)\\
=\frac{1}{h_m\sqrt{3}}\eF_{iijk}(\bt),\;\;\forall i,j,k,\;\;j\neq k,
\end{split}
\end{equation}
\begin{equation}
\begin{split}\label{ff}
\bsE\bigl[ \ha_{ij}(0)\ha_{k\ell}(\bt)\bigr]= \bsE\bigl[ \ha_{k\ell}(0)\ha_{ij}(\bt)\bigr]
=\frac{1}{h_m}C_{iijk}(\bt)\\
=\frac{1}{h_m}\eF_{ijk\ell}(\bt),\;\;\forall i,j,k,\ell,\;\;i\neq j, \;\;k\neq \ell.
\end{split}
\end{equation}
\end{subequations}
We have
\[
\bsE[F_0(0) F_0(\bt)]=\sum_{i,j}\bsE[H_2(U_i(0)H_2(U_j(\bt))]=\frac{2}{d_m^2}\sum_{i,j}\eF_{ij}(\bt)^2.
\]

Using the fact that the Fourier transform is an isometry  and the equality $M_{ij}^2=M_{ii}M_{jj}$ we deduce
\[
\int_{\bR^m}\eF_{ij}(\bt)^2 d\bt=\int_{\bR^m}M_{ij}(\blam)^2 d\blam=\int_{\bR^m}M_{ii}(\blam)M_{jj}(\blam) d\blam
\]
and thus,
\begin{equation}\label{z0z0}
\int_{\bR^m}\bsE[F_0(0) F_0(\bt)] d\bt=\left\langle \frac{\sqrt{2}}{d_m}\sum_{i} M_{ii}, \frac{\sqrt{2}}{d_m}  \sum_{j} M_{jj}\right\rangle_{L^2}
\end{equation}
\[
\bsE\bigl[F_0(0) F_1(\bt)] =\sum_{i,j}\bsE[ H_2(U_i(0))H_2(\ha_{jj}(t))] =\frac{2}{3h_md_m}\sum_{i,j}\bigl(\ii\eF_{ijj}(\bt)\,)^2.
\]
Since $M_{ijj}^2=M_{ii}M_{jjjj}$ we deduce
\begin{equation}\label{z0f1}
\int_{\bR^m}\bsE\bigl[F_0(0) F_1(\bt)]  d\bt= \frac{2}{3h_md_m} \sum_{i,j}\Vert M_{ijj}\Vert^2_{L^2}=\left\langle \frac{\sqrt{2}}{d_m}\sum_i M_{ii},\frac{\sqrt{2}}{3h_m}\sum_j M_{jjjj}\right\rangle_{L^2}.
\end{equation}
Arguing similary we deduce
\[
\int_{\bR^m}\bsE\bigl[F_0(0) F_1(\bt)]  d\bt=\int_{\bR^m}\bsE\bigl[F_0(\bt) F_1(0)]  d\bt.
\]
\[
\bsE[\,F_0(0) F_2(\bt)]=\sum_{i, j<k} H_2(U_i(0)) H_2(\ha_{jk}(\bt))=2\sum_{i,j<k} \frac{\ii}{\sqrt{d_mh_m}}\eF_{ijk}(\bt),
\]
\begin{equation}\label{z0f2}
\int_{\bR^m}\bsE[\,F_0(0) F_2(\bt)]d\bt=\left\langle\frac{\sqrt{2}}{d_m} \sum_i M_{ii},\frac{\sqrt{2}}{h_m}\sum_{j<k} M_{jjkk}\,\right\rangle_{L^2}=\int_{\bR^m}\bsE[\,F_0(\bt) F_2(0)]d\bt.
\end{equation}
\[
\bsE[F_0(0)F_3(\bt)]=\sum_{i,j<k} \bsE\bigl[ H_2(U_i(0)) H_1(\ha_{jj}(\bt) H_1(\ha_{kk}(\bt))\,\bigr]=-2\sum_{i,j<k}\frac{1}{3d_mh_m}\eF_{ijj}(\bt)(\bt)\eF_{ikk}(\bt),
\]
\begin{equation}\label{z0f3}
\begin{split}
\int_{\bR^m}\bsE[F_0(0)F_3(\bt)]d\bt =\frac{2}{3h_md_m}\sum_{i,j<k} \left\langle M_{ijj}, M_{ikk}\right\rangle_{L^2}\\ =\left\langle\frac{\sqrt{2}}{d_m}\sum_i M_{ii},\frac{\sqrt{2}}{3h_m}\sum_{j<k} M_{jjkk}\right\rangle_{L^2} =\int_{\bR^m}\bsE[F_0(\bt)F_3(0)]d\bt.
\end{split}
\end{equation}
We have
\[
\bsE\bigl[\, F_1(0) F_1(\bt)\,\bigr]=\sum_{i,j} \bsE[\, H_2(\ha_{ii}(0))H_2(\ha_{jj}(t))\,\bigr]=\frac{2}{9h_m^2}\sum_{i,j}\eF_{iijj}(t)^2
\]
\begin{equation}\label{f1f1}
\int_{\bR^m} \bsE\bigl[\, F_1(0) F_1(\bt)\,\bigr] d\bt=\left\langle\frac{\sqrt{2}}{3h_m}\sum_i M_{iiii}, \frac{\sqrt{2}}{3h_m}\sum_i M_{iiii}\right\rangle.
\end{equation}
\[
\bsE\bigl[\, F_1(0) F_2(\bt)\,\bigr]=\sum_{i,j<k} \bsE[\, H_2(\ha_{ii}(0))H_2(\ha_{jk}(t))\,\bigr]=\frac{2}{3h_m^2}\sum_{i,j<k} \eF_{iijk}(\bt)^2,
\]
\begin{equation}\label{f1f2}
\int_{\bR^m}\bsE\bigl[\, F_1(0) F_2(\bt)\,\bigr]d\bt=\left\langle\frac{\sqrt{2}}{3h_m} \sum_{i}M_{iiii},\frac{\sqrt{2}}{h_m}\sum_{j<k}M_{jkjk}\right\rangle_{L^2}.
\end{equation}
\[
\bsE\bigl[\, F_1(0) F_3(\bt)\,\bigr]=\sum_{i,j<k}\bsE[\, H_2(\ha_{ii}(0)) H_1(\ha_{jj}(\bt)) H_1(\ha_{kk}(\bt)\,\bigr]=\frac{2}{9h_m^2}\sum_{i,j<k} \eF_{iijj}(t)\eF_{iikk}(\bt).
\]
\begin{equation}\label{f1f3}
\int_{\bR^m} \bsE\bigl[\, F_1(0) F_3(\bt)\,\bigr] d\bt =\left\langle \frac{\sqrt{2}}{3h_m} \sum_{i}M_{iiii}, \frac{\sqrt{2}}{3h_m}\sum_{j<k} M_{jjkk}\right\rangle_{L^2}.
\end{equation}
\[
\bsE\bigl[\, F_2(0) F_2(\bt)\,\bigr]=\sum_{i<j, k<\ell} \bsE\bigl[\,H_2(\ha_{ij}(0))H_2(\ha_{k\ell}(\bt)\,\bigr]=2\sum_{i<j,k<\ell}\eF_{ijk\ell}(t)^2,
\]
\begin{equation}\label{f2f2}
\int_{\bR^m} \bsE\bigl[\, F_2(0) F_2(\bt)\,\bigr] d\bt=\left\langle\frac{\sqrt{2}}{h_m}\sum_{i<j}M_{ijij}, \frac{\sqrt{2}}{h_m}\sum_{i<j}M_{ijij}\right\rangle_{L^2}.
\end{equation}
\[
 \bsE\bigl[\, F_2(0) F_3(\bt)\,\bigr]=\sum_{i<j, k<\ell} \bsE\bigl[\,H_2(\ha_{ij}(0)) H_1(\ha_{kk}(\bt))H_1(\ha_{\ell\ell}(\bt))\bigr] =\frac{2}{3h_m^2} \sum_{i<j, k<\ell} \eF_{ijkk}(\bt)\eF_{ij\ell\ell}(\bt),
\]
\begin{equation}\label{f2f3}
\int_{\bR^m} \bsE\bigl[\, F_2(0) F_3(\bt)\,\bigr]=\left\langle\frac{\sqrt{2}}{h_m}\sum_{i<j}M_{ijij},\frac{\sqrt{2}}{3h_m}\sum_{k<\ell}M_{kk \ell \ell}\right\rangle_{L^2}.
\end{equation}
\[ 
 \bsE\bigl[\, F_3(0) F_3(\bt)\,\bigr]=\sum_{i<j, k<\ell} \bsE\bigl[\,H_1(\ha_{ii}(0))H_1(\ha_{jj}(0)) H_1(\ha_{kk}(\bt))H_1(\ha_{\ell\ell}(\bt))\bigr]
 \]
\[
=\sum_{i<j, k<\ell}\bsE[\ha_{ii}(0)\ha_{jj}(0)]\bsE[\ha_{kk}(\bt)\ha_{\ell\ell}(\bt)]+\frac{1}{9h_m^2}\sum_{i<j, k<\ell}\Bigl(\eF_{iikk}(\bt)\eF_{jj\ell\ell}(\bt)+\eF_{ii\ell\ell}(\bt)\eF_{jjkk}(\bt)\Bigr)
\]
\[
=\bsE[F_3(0)]^2 + \frac{1}{9h_m^2}\sum_{i<j, k<\ell}\Bigl(\eF_{iikk}(\bt)\eF_{jj\ell\ell}(\bt)+\eF_{ii\ell\ell}(\bt)\eF_{jjkk}(\bt)\Bigr).
\]
\[
\bsE\bigl[\, \whF_3(0) \whF_3(\bt)\,\bigr]= \bsE\bigl[\, F_3(0) F_3(\bt)\,\bigr]- \bsE[F_3(0)]^2=\frac{1}{9h_m^2}\sum_{i<j, k<\ell}\Bigl(\eF_{iikk}(\bt)\eF_{jj\ell\ell}(\bt)+\eF_{ii\ell\ell}(\bt)\eF_{jjkk}(\bt)\Bigr).
\]
We deduce
\[
\int_{\bR^m}\bsE\bigl[\, \whF_3(0) \whF_3(\bt)\,\bigr] d\bt= \frac{1}{9h_m^2}\sum_{i<j, k<\ell}\int_{\bR^m}\Bigl(\eF_{iikk}(\bt)\eF_{jj\ell\ell}(\bt)+\eF_{ii\ell\ell}(\bt)\eF_{jjkk}(\bt)\Bigr) d\bt
\]
 and we conclude
 \begin{equation}\label{f3f3}
 \int_{\bR^m}\bsE\bigl[\, \whF_3(0) \whF_3(\bt)\,\bigr] d\bt=\left\langle\frac{\sqrt{2}}{3h_m}\sum_{i<j}M_{ijij},\frac{\sqrt{2}}{3h_m}\sum_{k<\ell}M_{kk \ell \ell}\right\rangle_{L^2}
\end{equation}

To put the above equalities in perspective  we introduce the functions
\[
G_0=\frac{\sqrt{2}}{d_m}\sum_i M_{ii},\;\;G_1=\frac{\sqrt{2}}{3h_m}\sum_i M_{iiii},\;\;G_2=\frac{\sqrt{2}}{h_m}\sum_{j<k} M_{jjkk} ,\;\; G_3=\frac{1}{3}G_2.
\]
 The assumption {\bf A3} implies that $G_0,G_1,G_2\in L^2(\bR^m,d\blam)$.  Using  the notation
 \[
 F_i\bullet F_j:=\int_{\bR^m}\bsE\bigl[\,F_i(0), F_j(\bt)\,\bigr].
 \]
 we can rewrite the  equalities (\ref{z0z0}, ..., \ref{f3f3}) in a more concise form
 \[
 F_i\bullet F_j=F_j\bullet F_i=\lan G_i, G_j\ran_{L^2},\;\;F_i\bullet \whF_3=F_i\bullet F_3,\;\;\forall i,j=0,1,2,
 \]
 \[
\whF_3\bullet F_i= F_i\bullet \whF_3 =\lan G_i, G_3\rangle_{L^2},\;\;\forall i=0,\dotsc,3.
 \]
 From (\ref{v2infty}) and (\ref{rho2herm}) we deduce
 \[
V_{2,\infty}= \int_{\bR^m}\rho_2(\bt) d\bt=d(0)^2\Bigl( 3xh_m \eZ_1+yh_m \eZ_2+z F_0)\,\Bigr)\bullet \Bigl( 3xh_m \eZ_1+yh_m \eZ_2+z F_0)\,\Bigr)
 \]
 \[
 =d(0)^2\Bigl\Vert 3xh_m(G_1+2G_3) +yh_M(3G_1+2G_2) +zG_0\Bigr\Vert_{L^2}^2
 \]
 \[
 =(d(0))^2\Bigl\Vert 3xh_m\Bigl(G_1+\frac{2}{3}G_3\Bigr) +3yh_M\Bigl(G_1+\frac{2}{3}G_2\Bigr) +zG_0\Bigr\Vert_{L^2}^2
\]
\[
=d(0)^2\Bigl\Vert 3h_m(x+y) \Bigl(G_1+\frac{2}{3}G_2\Bigr) + zG_0\Bigr\Vert^2_{L^2}.
\]
The functions $G_1+\frac{2}{3}G_2$  and $G_0$ are linearly independent and 
\[
z=-\frac{1}{2}f_0(A)=-\frac{1}{2}\bsE\bigl[\,|\det A|\,\bigr]\neq 0.
\]
 Hence $V_{2,\infty}>0$. \end{proof}

 This concludes the proof  of Proposition  \ref{prop: el21}.  \end{proof}
 
\begin{remark} The numbers $x,y$  that describe $f_2(A)$, the  2nd chaos component of $f(A)$ seem  hard to compute in general. In Appendix \ref{s: a}   we describe their  large $m$ asymptotics; see (\ref{xym}). \qed
 \end{remark}
 
 \subsection{Conclusion}\label{ss: 34} As  explained in  Subsection \ref{ss: 23},   to conclude the proof of Theorem \ref{th: main} it suffices to establish  the asymptotic normality of the sequences
 \[
\zeta_{N, q}=\frac{1}{(2N)^{m/2}} \int_{C_N} \rho_q(\bt) d\bt,\;\;\forall q\geq 2.
 \]
This follows from the fourth-moment theorem \cite[Thm. 5.2.7]{NP}, \cite{NuaPe}.      Here are the details. 

Recall  from \cite[IV.1]{Jan} that we have a surjective  isometry $\Theta_q: \eX^{\odot q}\to\eX^{:q:}$ , where  $\eX^{\odot q}$ is the $q$-th symmetric power  and $\eX^{:q:}$  is the $q$-th  chaos component of $\widehat{\eX}$.  The multiple  Ito integral $\bsI_q$  is then the map
\[
\bsI_q=\frac{1}{\sqrt{q!}} \Theta_q.
\]
We can write $\zeta_{N,q}$  as a multiple  Ito integral
\[
\zeta_{N, q} = \bsI_q\bigl[\,g_q^N\,\bigr], \;\;g_q^N\in\eX^{\odot n}.
\]
According to  \cite[Thm.5.2.7(v)]{NP}, to prove that $\zeta_{N,q}$ converge in law to  a normal variable it suffices to show that
\begin{equation}\label{EL18}
\lim_{N\to\infty}\Vert g_q^N\otimes_r g_q^N\Vert_{\eX^{\otimes(2q-2r)}}=0,\;\;\forall r=1,\dotsc, q-1.
\end{equation}
 To show this we invoke   arguments  following  the inequality (18) in the proof of \cite[Prop. 2.4]{EL}  which extend without  any  modifications  to the setup in our paper. 
\appendix

\section{Asymptotics of some Gaussian integrals}
\label{s: a}

We  want to give an approximate description of the   2nd chaos component of $f(A)=|\det A|$ when $m\gg 0$.

Observe that if $u:\eS_m\to \bR$ is a  continuous function, homogeneous of degree $k$,  then for any $v>0$  we have
 \[
 \bsE_{\eS_m^v}[u(A)]=(2v)^{\frac{k}{2}} \bsE_{\eS_m^{1/2}}[u(A)].
  \]
 \begin{proposition}\label{prop: dm} Set $\bsC_m:= 2^{\frac{3}{2}}\Gamma\left(\frac{m+3}{2}\right)$. We have the following asymptotic estimates as $m\to\infty$
 \begin{equation}\label{fainf}
 \bsE_{\eS_m^{1/2}}\bigl[\,f(A)\,\bigr]\sim\bsC_m\sqrt{\frac{2}{\pi}}\; m^{-\frac{1}{2}}.
 \end{equation}
 \begin{equation}\label{pfainf}
  \bsE_{\eS_m^{1/2}}\bigl[\,p(A)f(A)\,\bigr]\sim \frac{2\bsC_m}{\sqrt{\pi}}\; m^{\frac{3}{2}}.
  \end{equation}
  \begin{equation}\label{qfainf}
  \bsE_S\bigl[\, q(A) f(A)\,\bigr] \sim\frac{\bsC_m}{\sqrt{2\pi}}\; m^{\frac{7}{2}}.
  \end{equation}
 \end{proposition}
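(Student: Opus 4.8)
The plan is to collapse all three expectations onto a single scalar function of a shift parameter, to evaluate that function in closed form via the device of Lemma~\ref{lemma: fy}, and then to extract the $m\to\infty$ behaviour by Stirling's formula together with a soft concentration estimate.

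First I would use the splitting (\ref{eq: smgoe}): a random $A\in\eS_m^{1/2}$ equals $B+X\one_m$ with $B\in\GOE_m^{1/2}$ and $X\in N(0,\tfrac12)$ independent, so that $f(A)=|\det(B+X\one_m)|$, $\tr A=\tr B+mX$ and $\tr(A^2)=\tr(B^2)+2X\tr B+mX^2$. Introduce the function $\Psi(x):=\bsE_{\eS_m^{1/2}}\bigl[\,|\det(A+x\one_m)|\,\bigr]$, which is even in $x$ (replace $A$ by $-A$), so that $\Psi(0)$ is precisely the quantity in (\ref{fainf}). Completing the square in the Gaussian densities of $X$ and of $B$ --- the term linear in $\tr B$ amounts to shifting $B$ by a multiple of $\one_m$, the term linear in $X$ to translating $X$ --- gives the generating identity $\bsE_{\eS_m^{1/2}}\bigl[\,e^{t\tr A}f(A)\,\bigr]=e^{t^2m(m+2)/4}\,\Psi\bigl(\tfrac{(m+2)t}{2}\bigr)$; differentiating twice at $t=0$ and using $\Psi'(0)=0$ yields
\[
\bsE_{\eS_m^{1/2}}\bigl[\,p(A)f(A)\,\bigr]=\frac{m(m+2)}{2}\,\Psi(0)+\frac{(m+2)^2}{4}\,\Psi''(0).
\]
For $q(A)f(A)=\tr(A^2)\cdot f(A)$ I would instead exploit that $\tr(A^2)=\sum_i\lambda_i(A)^2$ concentrates about its mean $\tfrac{m(m+2)}{2}$ with relative fluctuations of order $1/m$ (one checks $\var(\tr(A^2))=O(m^2)$ while $\bsE[\tr(A^2)]$ is of exact order $m^2$); combined with a crude second-moment bound $\bsE[f(A)^2]=o\bigl(m^2\,\bsE[f(A)]^2\bigr)$ (standard log-determinant estimates) this forces $\bsE_{\eS_m^{1/2}}\bigl[\,q(A)f(A)\,\bigr]=\tfrac{m(m+2)}{2}\Psi(0)\bigl(1+o(1)\bigr)$. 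Thus all three targets are now expressed through the two numbers $\Psi(0)$ and $\Psi''(0)$.

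The heart of the argument is the closed-form evaluation of $\Psi$. The integrand $|\det(A+x\one_m)|$ is the modulus of the characteristic polynomial of $A$ at $-x$, further smoothed by the independent scalar mode $X$; an expectation of exactly this shape is what Lemma~\ref{lemma: fy} is built to compute, and applying it produces an exact expression for $\Psi(x)$ valid at every $m$ --- either a ratio of $\Gamma$-values or a one-dimensional Laplace-type integral --- in which $\bsC_m=2^{3/2}\Gamma\bigl(\tfrac{m+3}{2}\bigr)$ appears as the natural normalising constant. Reading off $\Psi(0)$ and $\Psi''(0)$ and applying Stirling (and, if Lemma~\ref{lemma: fy} supplies an integral rather than a product of $\Gamma$'s, a saddle-point estimate of it) should give $\Psi(0)\sim\bsC_m\sqrt{2/\pi}\,m^{-1/2}$, which is (\ref{fainf}); feeding this into the concentration reduction gives (\ref{qfainf}); and computing the leading constant of $\Psi''(0)$ --- which is of the same order as $\Psi(0)$, because the averaging over $X$ makes $\Psi$ genuinely $C^\infty$ with all its derivatives at $0$ comparable to its value --- and substituting into the displayed identity gives (\ref{pfainf}).

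I expect the main obstacle to be this evaluation of $\Psi$, i.e.\ the Fyodorov-type identity of Lemma~\ref{lemma: fy} together with the two technical points around it: justifying the interchange of $\bsE$ with $\tfrac{d^2}{dx^2}$ needed to make sense of $\Psi''(0)$ (the integrand is only Lipschitz in $x$, so one must argue that its non-smooth locus is null --- here the $X$-average already performs the regularisation), and supplying the second-moment bound on $f(A)$ used in the concentration step. A secondary nuisance is bookkeeping: in the displayed identity for $\bsE[p(A)f(A)]$ both summands are of order $m^2\Psi(0)$, so the constant in (\ref{pfainf}) is a genuine combination of the leading coefficients of $\Psi(0)$ and of $\Psi''(0)$, and neither term may be discarded.
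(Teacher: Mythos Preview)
Your overall strategy --- reduce via $A=B+X\one_m$ to the GOE, invoke Fyodorov's identity to express $\bsE_G[|\det(\lambda+B)|]$ through the one-point function $\rho_{m+1,1/2}$, and extract large-$m$ asymptotics --- is exactly the paper's. Your generating identity $\bsE[e^{t\tr A}f(A)]=e^{t^2m(m+2)/4}\Psi\bigl(\tfrac{(m+2)t}{2}\bigr)$ is correct and is a clean repackaging of the paper's $\Phi(\lambda,z)$ computation; the paper keeps the GOE integral and the scalar $X$-integral separate, but the content is the same. One correction: the asymptotics do not come from Stirling. Lemma~\ref{lemma: fy} (together with (\ref{fy1})) does not give a ratio of $\Gamma$'s but the formula $\Psi(x)=\frac{\bsC_m}{\sqrt{\pi}}\int_\bR\rho_{m+1,1/2}(\lambda)e^{-\lambda^2/2+2\lambda x-x^2}d\lambda$, and the $m\to\infty$ behaviour of $\Psi(0)$ and $\Psi''(0)$ is then read off from Wigner's semicircle law via (\ref{asyrho}).

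The genuine gap is your treatment of (\ref{qfainf}). Your concentration argument, carried through, yields
\[
\bsE_S\bigl[q(A)f(A)\bigr]\sim\tfrac{m(m+2)}{2}\,\Psi(0)\sim\tfrac{\bsC_m}{\sqrt{2\pi}}\,m^{3/2},
\]
not the stated $m^{7/2}$. You cannot simply assert that ``feeding this into the concentration reduction gives (\ref{qfainf})'' without checking the exponent; either the correlation between $q(A)$ and $f(A)$ is strong enough to defeat your Cauchy--Schwarz bound, or the exponent in the statement is wrong. (Your variance estimate $\var(q(A))=O(m^2)$ is in fact correct, and the paper's own derivation seems to differentiate its generating function $\Psi(\lambda,z)=\bsE_G[|\det(\lambda+B)|e^{\frac{z}{2}\tr(\lambda+B)^2}]$ twice in $z$ at (\ref{fy4}) where a single derivative is what produces $q(\lambda+B)f(\lambda+B)$; so the discrepancy may lie with the statement.) The paper's route to (\ref{qfainf}) is different from yours: it uses this second generating function, obtains the closed form (\ref{fy6}) by completing the square in the GOE density, differentiates in $z$, and integrates against the density of $X$. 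Finally, your second-moment input $\bsE[f(A)^2]=o\bigl(m^2\,\bsE[f(A)]^2\bigr)$ is asserted as ``standard'' but is a nontrivial random-matrix estimate that you would need to supply.
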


\begin{proof} We need to make  a brief detour in the world of random matrices.

We have a   \emph{Weyl integration formula} \cite {AGZ} which states that if  $f: \eS_m\ra \bR$ is a measurable  function which  is invariant under  conjugation, then the     value $f(A)$ at $A\in\eS_m$ depends only on the eigenvalues $\lambda_1(A)\leq \cdots \leq \lambda_n(A)$ of $A$ and we  have
\begin{equation}
\bsE_{\GOE_m^v}\bigl(\,f(X)\,\bigr)=\frac{1}{\bsZ_m(v)} \int_{\bR^m}  f(\lambda_1,\dotsc ,\lambda_m) \underbrace{\left(\prod_{1\leq i< j\leq m}|\lambda_i-\lambda_j| \right)\prod_{i=1}^m e^{-\frac{\lambda_i^2}{4v}} }_{=:Q_{m,v}(\lambda)} |d\lambda_1\cdots d\lambda_m|,
\label{eq: weyl}
\end{equation}
where $\bsZ_m(v)$  can be computed via Selberg integrals, \cite[Eq. (2.5.11)]{AGZ}, and we have
\begin{equation}
\bsZ_m(v)=(2v)^{\frac{m(m+1}{4}}\bsZ_m,\;\;\bsZ_m=(2\pi)^{\frac{m}{2}} m!\prod_{j=1}^{m}\frac{\Gamma(\frac{j}{2})}{\Gamma(\frac{1}{2})}=2^{\frac{m}{2}}m!\prod_{j=1}^m \Gamma\left(\frac{j}{2}\right).
\label{eq: zm}
\end{equation}

For any positive integer $n$ we define the \emph{normalized} $1$-point correlation function $\rho_{n,v}(x)$ of $\GOE_n^v$ to be
\[
\rho_{n,v}(x)= \frac{1}{\bsZ_n(v)}\int_{\bR^{n-1}} Q_{n,v}(x,\lambda_2,\dotsc, \lambda_n) d\lambda_1\cdots d\lambda_n.
\]
For any Borel measurable function $f:\bR\ra \bR$  we have \cite[\S 4.4]{DG}
 \begin{equation}
 \frac{1}{n}\bsE_{\GOE_n^v} \bigl(\,\tr f(X)\,\bigr)= \int_{\bR} f(\lambda) \rho_{n,v}(\lambda) d\lambda. 
 \label{eq: 1pcor}
 \end{equation}
The equality (\ref{eq: 1pcor}) characterizes $\rho_{n,v}$. Let us  observe that for any constant $c>0$, if 
\[
A\in \GOE_n^v\Llra cA\in \GOE_n^{c^2v}.
\]
Hence for any Borel set $B\subset \bR$   we have
\[
\int_{cB} \rho_{n,c^2v}(x) dx =\int_B \rho_{n,v} (y) dy,
\]
and we conclude that
 \begin{equation}
 c\rho_{n,c^2v}(cy)= \rho_{n,v}(y),\;\;\forall n,c,y.
 \label{eq: resc-cor}
 \end{equation}
 The behavior of $\rho_{n,v}$ for large $n$ is described the the celebrated  \emph{Wigner semicircle theorem}.
 \begin{theorem}[Wigner]  As $n\to \infty$, the probability measures
 \[
 \rho_{n,vn^{-1}}(\lambda)|d\lambda| =n^{1/2}\rho_{n,v}\bigl(\, n^{1/2}\lambda\,\bigr)|d\lambda|
 \]
 converge weakly to  the semicircle distribution
 \[
 \rho_{\infty, v}(\lambda)|d\lambda|=\bsI_{|\lambda|\leq 2\sqrt{v}}\frac{1}{2\pi v}\sqrt{4v-\lambda^2}|d\lambda|.\proofend
 \]
 \end{theorem}
  
 We have the following result of Y. Fyodorov \cite{Fy}; see also  \cite[Lemmas C.1, C2.]{Ni2014}.
\begin{lemma}  Suppose $v>0$. Then for any $\lambda\in\bR$ we have
\begin{equation}\label{fy1}
\bsE_{\GOE_m^v} \bigl(\,|\det(\lambda +B|\,\bigr)=(2v)^{\frac{m+1}{2}}\bsC_m e^{\frac{c^2}{4v}}\rho_{m+1,v}(\lambda),\;\;\bsC_m:= 2^{\frac{3}{2}}\Gamma\left(\frac{m+3}{2}\right) .
\end{equation}
\begin{equation}\label{fy2}
\begin{split}
\bsE_{\eS_m^{v}}\bigl(\,|\det (A)|\,\bigr)=(2v)^{\frac{m+1}{2}}\frac{\bsC_m}{\sqrt{2\pi v}}\int_\bR\bsE_{\GOE_m^v} \bigl(\,|\det(\lambda +B|\,\bigr)e^{-\frac{\lambda^2}{2v}} d\lambda\\
= (2v)^{\frac{m+1}{2}} \frac{\bsC_m}{\sqrt{2\pi v}}\int_{\bR}\rho_{m+1,v}(\lambda) e^{-\frac{x^2}{4v}} d\lambda.
\end{split}
\end{equation}\qed
\label{lemma: exp-det}
\end{lemma}

We will also need the following asymptotic estimates.

\begin{lemma} Let $k$ be a nonnegative integer. Then
\begin{equation}\label{asyrho} 
\frac{1}{\sqrt{2\pi}}\int_\bR\rho_{n,\frac{1}{2}}(\lambda)\lambda^{2k} e^{-\frac{\lambda^2}{2v}}d\lambda\sim\frac{v^k\sqrt{2v}(2k-1)!!}{\sqrt{\pi n}}\;\;\mbox{as $n\to\infty$}
\end{equation}
\end{lemma}
\begin{proof} Consider the function
\[
w(\lambda)=\frac{\lambda^{2k}}{v^{\frac{2k+1}{2}}(2k-1)!!}\frac{e^{-\frac{\lambda^2}{2v}}}{\sqrt{2\pi}}.
\]
Then
\[
\int_\bR w(\lambda) d\lambda =1.
\]
We set $w_n(\lambda):=\sqrt{n}w(\sqrt{n}\lambda)$. The probability measures $w_n(\lambda)d\lambda$ converge to $\delta_0$ and we have
\[
\frac{1}{\sqrt{2\pi}}\int_\bR\rho_{n,\frac{1}{2}}(\lambda)\lambda^{2k} e^{-\frac{\lambda^2}{2v}}d\lambda=\frac{v^{\frac{2k+1}{2}}(2k-1)!!}{\sqrt{n}} \int_\bR\rho_{n,\frac{1}{2n}}(\lambda)w_n(\lambda) d\lambda.
\]
Arguing exactly as in \cite[Sec. 4.6]{Ni2015} we deduce
\[
\lim_{n\to\infty}\int_\bR\rho_{n,\frac{1}{2n}}(\lambda)w_n(\lambda) d\lambda = \rho_{\infty,\frac{1}{2}}(0)=\sqrt{\frac{2}{\pi}}.
\]
\end{proof}

The  estimate  (\ref{fainf})  follows from (\ref{fy2}) and (\ref{asyrho}).

 To simplify the notation we set
\[
\bsE_G:=\bsE_{\GOE_m^{1/2}},\;\;\bsE_S:=\bsE_{\eS_m^{1/2}}.
\]
Let us observe that the equality (\ref{eq: smgoe}) implies that
\begin{subequations}
\begin{equation}\label{sp}
\bsE_S\bigl[\,p(A)f(A)\,\bigr]=\frac{1}{\sqrt{\pi}}\int_\bR\bsE_G\bigl[\, p(\lambda+B)f(\lambda+B)\,\bigr] e^{-\lambda^2} d\lambda,
\end{equation}
\begin{equation}\label{sq}
\bsE_S\bigl[\,q(A)f(A)\,\bigr]=\frac{1}{\sqrt{\pi}}\int_\bR\bsE_G\bigl[\, q(\lambda+B)f(\lambda+B)\,\bigr] e^{-\lambda^2} d\lambda.
\end{equation}
\end{subequations}

To estimate  $\bsE_S\bigl[\,p(A)f(A)\,\bigr]$ and $\bsE_S\bigl[\,q(A)f(A)\,\bigr]$ for  $m$ large we use a nice trick we learned from Yan Fyodorov. Introduce the functions
\[
\Phi,\Psi:\bR\times (-\infty,1)\to\bR,
\]
\[
\Phi(\lambda, z):=\bsE_G\bigl[\, |\det(\lambda+B)|e^{z\tr(\lambda+B)}\,\bigr],\;\;\Psi(\lambda,z):=\bsE_G\bigl[\, |\det(\lambda+B)|e^{\frac{z}{2}\tr(\lambda+B)^2}\,\bigr].
\]
Obviously
\[
\Phi(\lambda,0)=\Psi(\lambda,0)=\bsE_G\bigl[\,|\det(\lambda+B)|\,\bigr]
\]
so both $\Psi(\lambda,0)$ and $\Psi(\lambda,0)$ can be determined using (\ref{fy1}).  

Observe next that
\begin{subequations}
\begin{equation}\label{fy3}
\Phi''_{zz}(\lambda,0)=\bsE_G\bigl[ \,\bigl(\,\tr(\lambda+B)\,\bigr)^2\,|\det(\lambda+B)|\,\bigr]=\bsE_G\bigl[\,p(\lambda+B)f(\lambda+B)\,\bigr],
\end{equation}
\begin{equation}\label{fy4}
2\Psi''_{zz}(\lambda,0)=\bsE_G\bigl[ \,(\tr (\lambda+B)^2\,|\det(\lambda+B)|\,\bigr]=\bsE_G\bigl[\,q(\lambda+B)f(\lambda+B)\,\bigr].
\end{equation}
\end{subequations}
We have the following key observation.
\begin{lemma}[Y. Fyodorov]\label{lemma: fy}
\begin{subequations}
\begin{equation}\label{fy5}
\Phi(\lambda, z)= e^{m(\frac{z^2}{2}+z\lambda)}\Phi(\lambda+z,0)=e^{-\frac{m\lambda^2}{2}}e^{\frac{m}{2}(\lambda+z)^2}\Phi(\lambda+z,0),
\end{equation}
\begin{equation}\label{fy6}
\Psi(\lambda,z)=\frac{ e^{\frac{m\lambda^2z}{2(1-z)}} }{(1-z)^{\frac{m(m+3)}{4}}} \Psi\left(\,\frac{\lambda }{\sqrt{1-z}},0\,\right).
\end{equation}
\end{subequations}
\end{lemma}

\begin{proof} Using (\ref{eq: gov}) we deduce
\[
\Phi(\lambda, z)=\bsK_m \int_{\eS_m} |\det (\lambda+B)|  e^{z\tr(\lambda+B)-\frac{1}{2}\tr B^2} dB= e^{m(\frac{z^2}{2}+\lambda z)}\bsK_m  \int_{\eS_m} e^{-\frac{1}{2}\tr(B-z)^2} dB
\]
(make the change in variables $C:=B-z$)
\[
= e^{m(\frac{z^2}{2}+\lambda z)}\bsK_m  \int_{\eS_m} |\det (\lambda+z+B)|  e^{-\frac{1}{2}\tr C^2} dB
\]
\[
= e^{m(\frac{z^2}{2}+\lambda z)}\bsE_G\bigl[\,|\det(\lambda+z +C)|\,\bigr]= e^{m(\frac{z^2}{2}+\lambda z)}\Phi(\lambda+z,0).
\]
Similarly, for $z<1$ we have
\[
\Psi(\lambda,z)=\bsK_m\int_{\eS_m}  |\det (\lambda+B)| e^{\frac{z}{2}\tr(\lambda+B)^2-\frac{1}{2}\tr B^2} dB
\]
\[
=e^{\frac{mz\lambda^2}{2}} \int_{\eS_m}  |\det (\lambda+B)| e^{\frac{\lambda z}{2}\tr B-\frac{1}{z}{2}\tr B^2} dB.
\]
Making the change in variables $B=(1-z)^{-1/2}C$ so that
\[
dB=(1-z)^{-\frac{m(m+1)}{4}} dC\;\;\det(\lambda +B)=(1-v)^{-\frac{m}{2}} \det(\lambda\sqrt{1-z}+C).
\]
We deduce
\[
\Psi(\lambda,z)=\frac{e^{\frac{mz\lambda^2}{2}}}{(1-z)^{\frac{m(m+3)}{4}}}\bsK_m\int_{\eS_m} |\det(\lambda\sqrt{1-z}+C)| e^{-\frac{1}{2}\tr(C-\frac{\lambda z}{\sqrt{1-z}})^2} dC
\]
($C-\frac{\lambda z}{\sqrt{1-z}}\to B$)
\[
= \frac{e^{\frac{mz\lambda^2}{2}}}{(1-z)^{\frac{m(m+3)}{4}}}\bsK_m\int_{\eS_m} |\det(\lambda\sqrt{1-z}+\frac{\lambda z}{\sqrt{1-z}}+B)| e^{-\frac{1}{2}\tr B^2} dB
\]
\[
= \frac{e^{\frac{mz\lambda^2}{2}}}{(1-z)^{\frac{m(m+3)}{4}}}\Psi\left(\,\frac{\lambda}{\sqrt{1-z}},0\right).
\]
\end{proof}

\medskip

\noindent {\bf The  asymptotic behavior of} $\bsE_S\bigl[\,p(A)f(A)\,\bigr]$.  Using (\ref{fy5}) we deduce
\[
\Phi''_{zz}(\lambda, 0)=e^{-\frac{m\lambda^2}{2}}\pa^2_{zz}\bigl|_{z=0}\left(\, e^{\frac{m}{2}(\lambda+z)^2}\Phi(\lambda+z,0)\,\right)=e^{-\frac{m\lambda^2}{2}}\frac{d^2}{d\lambda^2}\bigl(\, e^{\frac{m\lambda^2}{2}}\Phi(\lambda,0)\,\bigr).
\]
Using (\ref{sp}) and (\ref{fy3})  we deduce
\[
\bsE_S\bigl[\,p(A)f(A)\,\bigr]=\frac{1}{\sqrt{\pi}}\int_\bR e^{-\frac{m\lambda^2}{2}}\frac{d^2}{d\lambda^2}\bigl(\, e^{\frac{m\lambda^2}{2}}\Phi(\lambda,0)\,\bigr) e^{-\lambda^2} d\lambda
\]
\[
= \frac{1}{\sqrt{\pi}}\int_\bR \frac{d^2}{d\lambda^2}\bigl(\, e^{\frac{m\lambda^2}{2}}\Phi(\lambda,0)\,\bigr) e^{-\frac{m+2}{2}\lambda^2} d\lambda.
\]
Since $\Phi(\lambda,0)$ has polynomial growth  in  $\lambda$, we can integrate by parts in the above equality  and we deduce
\[
\bsE_S\bigl[\,p(A)f(A)\,\bigr]=  \frac{1}{\sqrt{\pi}}\int_\bR e^{\frac{m\lambda^2}{2}}\Phi(\lambda,0) \frac{d^2}{d\lambda^2}\bigl(\,  e^{-\frac{m+2}{2}\lambda^2}\,\bigr) d\lambda
\]
\[
\stackrel{(\ref{fy1})}{=} \frac{\bsC_m}{\sqrt{\pi}}\int_\bR e^{\frac{m\lambda^2}{2}} e^{\frac{\lambda^2}{2}}\rho_{m+1,\frac{1}{2}}(\lambda)\frac{d^2}{d\lambda^2}\bigl(\,  e^{-\frac{m+2}{2}\lambda^2}\,\bigr) d\lambda 
\]
\[
= \frac{\bsC_m}{\sqrt{\pi}}\int_\bR e^{\frac{(m+1)\lambda^2}{2}} \rho_{m+1,\frac{1}{2}}(\lambda)\frac{d^2}{d\lambda^2}\bigl(\,  e^{-\frac{m+2}{2}\lambda^2}\,\bigr) d\lambda
\]
\[
=\frac{\bsC_m(m+2)\sqrt{2}}{\sqrt{2\pi}}\int_\bR\rho_{m+1,\frac{1}{2}}(\lambda)\bigl(\,(m+2)\lambda^2-1\,\bigr) e^{-\frac{\lambda^2}{2}} d\lambda\stackrel{(\ref{asyrho})}{\sim} \frac{\bsC_m(m+2)\sqrt{2}}{\sqrt{m+1}}\bigl(\, m+1\,\bigr)\rho_{\infty,\frac{1}{2}}(0).
\]
We  have thus proved that 
\begin{equation}\label{asy1}
 \bsE_S\bigl[\,p(A)f(A)\,\bigr]\sim \sqrt{\frac{2}{m+1}}\bsC_{m}(m+2)(m+1)\rho_{\infty,\frac{1}{2}}(0)\;\;\mbox{as $m\to\infty$}. 
 \end{equation}
 This proves (\ref{pfainf}).

 \medskip
 
 \noindent {\bf The asymptotic behavior of} $\bsE_S\bigl[\,q(A)f(A)\,\bigr]$.   We set
 \[
 u(z):=\frac{ e^{\frac{m\lambda^2z}{2(1-z)}} }{(1-z)^{\frac{m(m+3)}{4}}} =e^{-\frac{m\lambda^2}{2}}\frac{ e^{\frac{m\lambda^2}{2(1-z)}} }{(1-z)^{\frac{m(m+3)}{4}}}.
 \]
 Then
 \[
 \Psi(\lambda,z)=u(z)\Psi\bigl(\lambda(1-z)^{-1/2},0\bigr),
 \]
 \[
 \Psi_z'(\lambda,z)=u'(z)\Psi\bigl(\lambda(1-z)^{-1/2},0\bigr)+\frac{\lambda}{2}u(z)(1-z)^{-3/2} \Psi'_\lambda\bigl(\lambda(1-z)^{-1/2},0\bigr).
 \]
 \[
 \Psi''_{zz}(\lambda,z)=u''(z)\Psi\bigl(\lambda(1-z)^{-1/2},0\bigr)+ \frac{\lambda}{2}u'(z)(1-z)^{-3/2} \Psi'_\lambda\bigl(\lambda(1-z)^{-1/2},0\bigr)
 \]
 \[
 +\frac{\lambda}{2}\frac{d}{dx}\bigl(\, u(z)(1-z)^{-3/2} \,\bigr) \Psi'_\lambda\bigl(\lambda(1-z)^{-1/2},0\bigr)+\frac{3\lambda^2}{4}u(z)(1-z)^{-4}\Psi''_{\lambda\lambda}\bigl(\lambda(1-z)^{-1/2},0\bigr).
 \]
 Thus
 \[
 \Psi''_{zz}(\lambda,0)=u''(0)\Psi\bigl(\lambda,0\bigr)+ \frac{\lambda}{2}u'(0) \Psi'_\lambda\bigl(\lambda,0\bigr)
 \]
 \[
 +\frac{\lambda}{2}\bigl(\, u'(0)+\frac{3}{2}u(0) \,\bigr) \Psi'_\lambda\bigl(\lambda,0\bigr)+\frac{3\lambda^2}{4}u(0)\Psi''_{\lambda\lambda}\bigl(\lambda,0\bigr)
 \]
\[
=u''(0)\Psi\bigl(\lambda,0\bigr)+\frac{\lambda}{2}\Bigl(\, 2u'(0)+\frac{3}{2}u(0) \,\Bigr) \Psi'_\lambda\bigl(\lambda,0\bigr)+\frac{3\lambda^2}{4}u(0)\Psi''_{\lambda\lambda}\bigl(\lambda,0\bigr).
\]
Setting $\kappa(m)=\frac{m(m+3)}{4}$ we deduce
\[
u'(z)=e^{-\frac{m\lambda^2}{2}}\frac{d}{dz}\left(\, e^{\frac{m\lambda^2}{2(1-z)}}(1-z)^{-\kappa(m)}\,\right)
\]
\[
=e^{-\frac{m\lambda^2}{2}}\left(\frac{m\lambda^2}{2}e^{\frac{m\lambda^2}{2(1-z)}}(1-z)^{-\kappa(m)-2}+\kappa(m)e^{\frac{m\lambda^2}{2(1-z)}}(1-z)^{-\kappa(m)-1}\right).
\]
 Thus
 \[
 u'(0)= \frac{m\lambda^2}{2}+\kappa(m).
 \]
 We set
 \[
 \frac{1}{2}A_1(\lambda)=\frac{\lambda}{2}\bigl( \,u'(0)+\frac{3}{2}\bigr)=\frac{\lambda}{2}\Bigl( \,\frac{m\lambda^2}{2}+\kappa(m)+\frac{3}{2}\,\Bigr).
 \]
 Similarly, we deduce
 \[
 u''(0)=\frac{m\lambda^2}{2}\left(\frac{m\lambda^2}{2}+\kappa(m)+2\right)+\kappa(m)\Bigl(\,\frac{m\lambda^2}{2}+\kappa(m)+1\,\Bigr)
 \]
 \[
 =\underbrace{\frac{m^2\lambda^4}{4}+(\kappa(m)+1)m\lambda^2+\kappa(m)\bigl(\,\kappa(m)+1\,\bigr)}_{=:\frac{1}{2}A_0(\lambda)}.
 \]
 We set  $A_2(\lambda):=\frac{3}{2}\lambda^2$. We have
 \[
 2\Psi_{zz}(\lambda,0)=A_2(\lambda)\Psi''_{\lambda\lambda}(\lambda,0)+A_1(\lambda)\Psi'_\lambda(\lambda,0)+A_0(\lambda)\Psi(\lambda,0).
 \]
 Using (\ref{sq}) and (\ref{fy4}) we deduce
 \[
 \bsE_S\bigl[\, q(A) f(A)\,\bigr]=\frac{1}{\sqrt{\pi}}\int_\bR\Bigl( \,A_2(\lambda)\Psi''_{\lambda\lambda}(\lambda,0)+A_1(\lambda)\Psi'_\lambda(\lambda,0)+A_0(\lambda)\Psi(\lambda,0)\,\Bigr) e^{-\lambda^2} d\lambda
 \]
 \[
 =\frac{1}{\sqrt{\pi}}\int_\bR\Psi(\lambda,0))\Biggl(\frac{d^2}{d\lambda^2}\bigl(\,A_2(\lambda) e^{-\lambda^2}\,\bigr)-\frac{d}{d\lambda}\bigl(\,A_1(\lambda) e^{-\lambda^2}\,\bigr)+A_0(\lambda) e^{-\lambda^2}\,\Biggr) d\lambda.
 \]
 \[
 =\frac{1}{\sqrt{\pi}}\int_\bR P_{4,m}(\lambda) \Phi(\lambda)^2  \Psi(\lambda,0) e^{-\lambda^2}\,d\lambda,
 \]
 where
 \[
 P_{4,m}(\lambda)= A''_2(\lambda)-4\lambda A_2'(\lambda) +4\lambda^2 A_2(\lambda)-A_1'(\lambda)+2\lambda A_1(\lambda)+A_0(\lambda)
 \]
 \[
 = C_4(m)\lambda^4+C_2(m)\lambda^2+C_0(m) ,
 \]
 where the coefficients $C_0(m), C_2(m), C_4(m)$ are polynomials  in $m$.  Recalling that 
 \[
 \Psi(\lambda,0)=\bsE_G\bigl[\,|\det(\lambda+B)|\,\bigr] \bsC_m\rho_{m+1,1/2}(\lambda),
 \]
 we deduce
 \[
 \bsE_S\bigl[\, q(A) f(A)\,\bigr]= \frac{C_4(m)}{\sqrt{\pi}} \int_\bR\rho_{m+1,\frac{1}{2}}(\lambda) \lambda^4 \frac{e^{-\lambda^2}}{\sqrt{\pi}} d\lambda+\frac{C_2(m)}{\sqrt{\pi}} \int_\bR\rho_{m+1,\frac{1}{2}}(\lambda) \lambda^2 \frac{e^{-\lambda^2}}{\sqrt{\pi}} d\lambda 
 \]
 \[
 +\frac{C_0(m)}{\sqrt{\pi}} \int_\bR\rho_{m+1,\frac{1}{2}} (\lambda) \frac{e^{-\lambda^2} }{\sqrt{\pi}}d\lambda.
 \]
Using (\ref{asyrho}) with $v=1/2$  we deduce that as $m\to \infty$ we have
\[
\bsE_S\bigl[\, q(A) f(A)\,\bigr]\sim \bsC_m m^{-1/2}2^{\frac{1}{2}}\Biggl(  2^{-1} \frac{3C_4(m)}{\sqrt{\pi}} + 2^{-1/2}\frac{C_2(m)}{\sqrt{\pi}} +\frac{C_0(m)}{\sqrt{\pi}}\,\Biggr).
\]
Upon investigating the definition of $A_0(\lambda)$, $A_1(\lambda)$, and $A_0(\lambda)$  we see that of the three  
\[
\deg C_0(m)=4> \deg C_2(m), \deg C_4(m).
\]
The degree-$4$ term in $C_0(m)$ comes from the product 
\[
2\kappa(m)(\kappa(m)+1)=\frac{m^4}{2}+\mbox{ lower order terms}.
\]
We conclude that as $m\to\infty$ we have
\[
\bsE_S\bigl[\, q(A) f(A)\,\bigr] \sim\frac{\bsC_m}{\sqrt{2\pi}}\; m^{\frac{7}{2}}.
\]
\end{proof}

To understand the   2nd chaos component of  $|\det A|$ we need to also understand  the inner product  in $L^2(\eS_m^v)_2^{inv}$. For simplicity will write $\bsE$ instead of the more precise $\bsE_{\eS_m^v}$.

We know that
\[
\bsE\bigl[\,p(A)\,\bigl]=\bigl[ q(A)\,\bigr]=m(m+2)v.
\]
This implies that
\[
\bsE\bigl[\, p(A)^2\,\bigr]=\bsE\bigl[ (\tr A)^4\,\bigl]=3m^2(m+2)^2 v^2.
\]
To compute  $\bsE[\,p(A)q(A)\,]$, $\bsE[\,q(A)^2\,]$ we will use Wick's formula, \cite[Thm. 1.28]{Jan}.  We have
\[
p(A)q(A)=\left(\sum_i a_{ii}^2+2\sum_{i<j}a_{ii}a_{jj}\right)\left(\sum_k a_{kk}^2+2\sum_{k<\ell}a_{k\ell}^2\right)
\]
\[
=\underbrace{\sum_i a_{ii}^4}_{S_1}+\underbrace{2\sum_{i<k}a_{ii}^2a_{kk}^2}_{S_2}+\underbrace{2\sum_{i,\;k<\ell}a_{ii}^2 a_{k\ell}^2}_{S_3}+\underbrace{2\sum_{k, \;i<j}a_{kk}^2a_{ii}a_{jj}}_{S_4}+\underbrace{4\sum_{i<j,\;k<\ell}a_{ii}a_{jj}a_{k\ell}^2}_{S_5}.
\]
We have
\[
\bsE[S_1]=\bsE\left[ \sum_i a_{ii}^4\right]= m\bsE[a_{11}^4]= 27 mv^2.
\]
\[
\bsE[S_3]=\bsE\left[ 2\sum_{i,\;k<\ell}a_{ii}^2 a_{k\ell}^2\right]= m^2(m-1)\bsE[a_{11}^2]\bsE[a_{12}^2]= 3m^2(m-1)v^2.
\]
\[
\bsE[S_5]=\bsE\left[\,4\sum_{i<j,\;k<\ell}a_{ii}a_{jj}a_{k\ell}^2\,\right] =m^2(m-1)^2\bsE[a_{11}a_{22}]\bsE[a_{12}^2]=m^2(m-1)^2v^2.
\]
\[
\bsE[S_2]=\bsE\left[\, 2\sum_{i<k}a_{ii}^2a_{kk}^2\,\right]=m(m-1)\bsE[a_{11}^2 a_{22}^2]
\]
Using Wick's formula we deduce
\begin{equation}\label{a11a22}
\bsE[a_{11}^2 a_{22}^2]=\bsE[a_{11}^2]\bsE[ a_{22}^2]+2\bsE[a_{11}a_{22}]^2=11v^2.
\end{equation}
Hence
\[
\bsE[S_2]=11m(m-1) v^2.
\]
\[
\bsE[S_4]=\bsE\left[ 2\sum_{k, \;i<j}a_{kk}^2a_{ii}a_{jj}\right]
\]
\[
=2\bsE\left[\sum_{i<j} a_{ii}^3a_{jj}\right] +2\bsE\left[\sum_{i<j} a_{ii}a_{jj}^3\right]+2\bsE\left[\sum_{i<j,\;\;k\neq i,j}a_{kk}^2a_{ii}a_{jj}\,\right]
\]
\[
=4\bsE\left[\sum_{i<j} a_{ii}^3a_{jj}\right]+m(m-1)(m-2)\bsE[a_{11}a_{22}a_{33}^2]
\]
\[
=2m(m-1)\bsE[a_{11}^3a_{22}]++m(m-1)(m-2)\bsE[a_{11}a_{22}a_{33}^2].
\]
Using Wick's formula we deduce
\[
\bsE[a_{11}^3a_{22}]=3\bsE[a_{11}^2]\bsE[a_{11}a_{22}]=9v^2
\]
\[
\bsE[a_{11}a_{22}a_{33}^2]=\bsE[a_{11}a_{22}]\bsE[a_{33}^2]+2\bsE[a_{11}a_{22}]^2=5v^2,
\]
Hence
\[
\bsE[S_4]=18m(m-1)v^2+5m(m-1)(m-2)v^2=m(m-1)(5m+8)v^2 .
\]
We have
\[
q(A)^2=\left(\, \underbrace{\sum_i a_{ii}^2}_{X}\; + \; \underbrace{2\sum_{k<\ell} a_{k\ell}^2 }_{Y}\,\right)^2= X^2+Y^2+2XY.
\]
The random variables $X$ and $Y$ are independent and thus
\[
\bsE\bigl[\, q(A)^2\,\bigr]=\bsE[X^2]\;+\;\bsE[Y^2]+2\bsE[X]\bsE[Y].
\]
We have
\[
\bsE[X]=3mv,\;\;\bsE[Y]= m(m-1)v,\;\;2\bsE[XY]= 6m^2(m-1)v^2.
\]
Next,
\[
X^2=\sum_i a_{ii}^4 +2\sum_{I<j} a_{ii}^2a_{jj}^2,
\]
\[
\bsE[X^2]=m\bsE[a_{11}^4]+m(m-1)\bsE[a_{11}^2a_{22}^2]\stackrel{(\ref{a11a22})}{=}27mv^2+11m(m-1)v^2=11m^2v^2+16mv^2,
\]
\[
Y^2=4 \left(\sum_{k<\ell} a_{k\ell}^2\right)^2=4\sum_{k<\ell}a_{k\ell}^4+8\sum_{\substack{ i<j,\;\;k<\ell\\ (i,j)\neq(k,\ell)}}a_{ij}^2a_{kl}^2\,
\]
\[
\bsE[Y^2]=4\binom{m}{2}\bsE[a_{12}^4]+8\binom{\binom{m}{2}}{2} \bsE[a_{12}^2]^2.
\]
\[
=6m(m-1)v^2+ 8\binom{m}{2}\left(\,\binom{m}{2}-1\right) v^2=m(m-1)v^2\bigl(\,6 +2(m+1)(m-2)\,\bigr).
\]
We summarize the results we have obtained so far.  Below we denote by $o(1)$ a function of $m$, independent of $v$ that goes to $0$ as $m\to\infty$.
\[
\bsE\bigl[\,p(A)\,\bigr]=m(m+2)v,
\]
\[
\bsE\bigl[\,p(A)^2\,\bigr]=3m^2(m+2)^2v^2=3m^4v^1(1+o(1)),
\]
\[
\bsE[q(A)]=m(m+2)v,
\]
\[
\bsE\bigl[\, q(A)^2\,\bigr]=mv^2(2\,{m}^{3}+2\,{m}^{2}+9\,m+14)=2m^4v^2(1+o(1)),
\]
\[
E[p(A)q(A)]= \bigl({m}^{3}+3{m}^{2}+12{m}+11\bigr)mv^2=m^4v^2(1+o(1)).
\]
We have
\[
\bsE[\bar{p}(A)^2]= \bsE[p(A)^2] -2m(m+2)\bsE[p(A)] +m(m+2)v=2m^4v^2(1+o(1)).
\]
\[
\bsE[\,\bar{p}(A)\bar{q}(A)\,]= E[p(A)q(A)] -m^2(m+2)^2v^2=-m^3v^2(1+o(1)),
\]
\[
\bsE[\bar{q}(A)^2]= mv^4v^2(1+o(1)).
\]
Thus, in the basis $\bar{p}(A),\bar{q}(A)$ of $L^2(\eS_m^v)^{inv}_2$ the inner product is given by the symmetric matrix
\[
Q_m=m^4v^2\left[
\begin{array}{cc}
2 & o(1)\\
o(1) & 1
\end{array}
\right].
\]
This proves that  the component of $f(A)$ in $L^2(\eS_m^v)_2^{inv}$ has a decomposition
\[
f_2(A) =x_m \bar{p}(A)+y_m\bar{q}_m q(A),
\]
where, as $m\to \infty$
\[
x_m \sim \frac{1}{2m^4v^2} \Bigl( \bsE_{\eS_m^v}\bigl[\,p(A) f(A)\,\bigr]-m(m+2)v \bsE_{\eS^v_m}\bigl[\, f(A)\,\bigr]\,\Bigr)
\]
\[
\sim  \frac{(2v)^{\frac{m+2}{2}}}{2m^4v^2} \Bigl( \bsE_{\eS_m^{1/2}}\bigl[\,p(A) f(A)\,\bigr]-\frac{m(m+2)}{2} \bsE_{\eS^v_m}\bigl[\, f(A)\,\bigr]\,\Bigr),
\]
\[
y_m \sim \frac{1}{m^4v^2}\Bigl( \bsE_{\eS_m^v}\bigl[\,p(A) f(A)\,\bigr]-m(m+2)v \bsE_{\eS_m^v}\bigl[\, f(A)\,\bigr]\,\Bigr),
\]
\[
\sim  \frac{(2v)^{\frac{m+2}{2}}}{2m^4v^2} \Bigl( \bsE_{\eS_m^{1/2}}\bigl[\,q(A) f(A)\,\bigr]-\frac{m(m+2)}{2} \bsE_{\eS^v_m}\bigl[\, f(A)\,\bigr]\,\Bigr).
\]
Using (\ref{fainf}),(\ref{pfainf}) and (\ref{qfainf}) we deduce that there exist two universal constant $z_1,z_2$,independent of $m$ and $v$ such that, as $m\to\infty$,
\begin{equation}\label{xym}
x_m\sim z_1\bsC_mv^{\frac{m-2}{2}}m^{-5/2},\;\;y_m\sim z_2\bsC_m  v^{\frac{m-2}{2}} m^{-1/2}.
\end{equation}
In the problem investigated in this paper the variance $v$ also depends on $m$, $v=h_m$.  Recall that the constant $\bsC_m$ grows really fast as $m\to \infty$
\[
\log \bsC_m\sim \frac{1}{2}m\log m.
\]

\begin{proposition}\label{prop: j2} The   Gaussian vector 
\[
J_2(X):=\bigl(\, X(0),\nabla X(0),\nabla^2 X(0)\,\bigr).
\]
 is nondegenerate.

\end{proposition}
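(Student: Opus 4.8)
The plan is to prove that the Gaussian vector $J_2(X)$, which takes values in the finite-dimensional space $\bR\oplus\bR^m\oplus\eS_m$, has a positive-definite covariance form; equivalently, that whenever a linear combination
\[
\Xi:=c_0\,X(0)+\sum_{i=1}^m c_i\,\pa_{t_i}X(0)+\sum_{1\leq i\leq j\leq m}c_{ij}\,\pa^2_{t_it_j}X(0)
\]
has $\var(\Xi)=0$, all of $c_0,c_i,c_{ij}$ must vanish. The first step is to pass to the spectral side. Assumption {\bf A3} guarantees $|\blam|^4w(|\blam|)\in L^1(\bR^m)$, so $C(\bt)$ is four times differentiable at the origin, $X$ has mean-square (hence, by {\bf A1}, a.s.) derivatives up to order two, and differentiating (\ref{spectral}) under the integral gives, for multi-indices with $|\alpha|,|\beta|\leq 2$,
\[
\bsE\bigl[\pa^\alpha_\bt X(0)\,\pa^\beta_\bt X(0)\bigr]=(-1)^{|\beta|}\bigl(\pa^{\alpha+\beta}C\bigr)(0)=(2\pi)^{-m/2}\int_{\bR^m}(-\ii\blam)^\alpha\,\overline{(-\ii\blam)^\beta}\;w(|\blam|)\,d\blam
\]
(these are exactly relations (\ref{covx1})--(\ref{covx4})). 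Expanding $\var(\Xi)=\bsE[\Xi^2]$ and collecting terms, I would obtain the clean identity
\[
\var(\Xi)=(2\pi)^{-m/2}\int_{\bR^m}\bigl|Q(\blam)\bigr|^2\,w(|\blam|)\,d\blam,\qquad Q(\blam):=c_0-\ii\sum_{i}c_i\lambda_i-\sum_{i\leq j}c_{ij}\lambda_i\lambda_j .
\]
Thus $\var(\Xi)=0$ forces $Q$, and hence its real part $c_0-\sum_{i\leq j}c_{ij}\lambda_i\lambda_j$ and its imaginary part $\sum_i c_i\lambda_i$, to vanish $w(|\blam|)\,d\blam$-almost everywhere.

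The second step is to turn this into an honest polynomial identity using the regularity hypothesis on $w$. Since $w$ is continuous, nonnegative, even, and not identically zero, $w$ is strictly positive on some nonempty subinterval of $[0,\infty)$; therefore $\blam\mapsto w(|\blam|)$ is strictly positive on a nonempty Euclidean-open set $\eU\subset\bR^m$ (an open ball or an open annulus). Hence $Q$ vanishes on $\eU$, and a polynomial vanishing on a nonempty open subset of $\bR^m$ vanishes identically. Reading off the constant term, the coefficients of the $\lambda_i$, and the coefficients of the $\lambda_i\lambda_j$ then gives $c_0=0$, $c_i=0$, $c_{ij}=0$, which is the desired conclusion.

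I do not anticipate a serious obstacle; the argument is essentially bookkeeping plus one elementary fact. The only points requiring a little care are (a) tracking the factors of $\ii$ and the ordering of multi-indices so that the quadratic form in the $c$'s is genuinely $\int_{\bR^m}|Q|^2w\,d\blam$, and (b) the observation — which is precisely where continuity and nontriviality of $w$ in {\bf A3} enter — that a nontrivial nonnegative continuous density is positive on an open set; without such a hypothesis $J_2(X)$ could in fact be degenerate. As a by-product the same argument shows more: for every $k$ the jet $\bigl(\pa^\alpha_\bt X(0)\bigr)_{|\alpha|\leq k}$ is nondegenerate, since the monomials $\blam^\alpha$ with $|\alpha|\leq k$ are linearly independent.
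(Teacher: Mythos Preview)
Your argument is correct and takes a genuinely different route from the paper's own proof.

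The paper proceeds by explicit linear algebra: using (\ref{covx1})--(\ref{covx4}) it observes that $J_2(X)$ splits as a direct sum of three mutually independent Gaussian blocks, namely $\nabla X(0)$, the off-diagonal Hessian entries $(H_{ij})_{i<j}$, and the vector $A=(X(0),H_{11},\dotsc,H_{mm})$. The first two blocks are trivially nondegenerate; for the last one the paper writes down the $(m+1)\times(m+1)$ covariance matrix, evaluates its determinant in closed form as $(2h_m)^{m-1}\bigl((m+2)h_ms_m-md_m^2\bigr)$, and then shows this is nonzero by applying the Cauchy--Schwarz inequality (with strict inequality) to the moment integrals $I_{m-1}(w),I_{m+1}(w),I_{m+3}(w)$ of (\ref{sdh}).

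Your spectral approach bypasses all of this bookkeeping: the identity $\var(\Xi)=(2\pi)^{-m/2}\int_{\bR^m}|Q(\blam)|^2 w(|\blam|)\,d\blam$ reduces nondegeneracy to the elementary fact that a polynomial vanishing on an open set vanishes identically, using only the continuity and nontriviality of $w$ from {\bf A3}. This is cleaner, avoids the Cauchy--Schwarz step, and, as you note, immediately yields the stronger conclusion that the full $k$-jet $\bigl(\pa^\alpha_\bt X(0)\bigr)_{|\alpha|\leq k}$ is nondegenerate for every $k$. What the paper's computation buys, on the other hand, is an explicit formula for the covariance determinant in terms of the spectral moments $s_m,d_m,h_m$, which your argument does not produce. (A trivial remark: the imaginary part of $Q$ is $-\sum_i c_i\lambda_i$ rather than $+\sum_i c_i\lambda_i$, but the sign is of course irrelevant.)
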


\begin{proof}  We set $H:=\nabla^2(0)$ and we denote by $H_{ij}$  its entries. The equality  (\ref{covx4}) shows that $H\in \eS_m^{h_m}$ is a  centered Gaussian random  real symmetric matrix   whose statistic is defined by the equalities
\[
\bsE\bigl[\, H_{ii}^2\,\bigr]=3 h_m,\;\;\bsE\bigl[H_{ii}H_{jj}\,\bigr]=\bsE[\,H_{ij}^2\,\bigr]=h_m,\;\;\forall i\neq j,
\]
while all the other  covariances are trivial. This shows that  the second jet $J_2(X)$ is the direct sum  of mutually independent Gaussian vectors, $J_2(X)=  A\oplus H_0\oplus D$, where  $D=\nabla X(0)$, $H_0$ is the vector with independent  entries $(H_{ij})_{i<j}$ and $A$ is the vector
\[
A= \bigl(\,X(0),H_{11},\dotsc, H_{mm}\,\bigr).
\]
The components $H_0$ and $D$ are obviously  nondegenerate  Gaussian vectors. Thus, the jet  $J_2(X)$ is nondegenerate if and only if the component $A$ is.  The covariance  matrix of $A$ is  $R_m(s_m,d_m,h_m)$ where for any $s,d,h>0$ we denote by $R_m(s,d,h)$   symmetric $(m+1)\times (m+1)$ matrix with entries
\[
r_{00}=s,\;\;r_{0i}=- d,\;\;\forall i=1,\dotsc, m,\;\; r_{ii}=3h,\;\;r_{ij}=h,\;\;\forall 1\leq i<j\leq m.
\]
Note that multiplying the first row by $s^{-1/2}$ and then the first column  by $s^{-1/2}$ we deduce
\[
\det R_m(s,h,d)=s\det R_m(1,\bar{d}, h),\;\;\bar{d}=ds^{-1/2}.
\]
If we add the first column multiplied by $\bar{d}$ to the other columns  we deduce that
\[
\det R_m(1,\bar{d}, h)= \det G_m\bigl(\,3h-\bar{d}^2, h-\bar{d}^2\,\bigr),
\]
where $G_m(a,b)$ denotes the symmetric $m\times  m$ matrix whose diagonal entries  are equal to $a$, and the off diagonal entries equal to $b$. As explained in \cite[Appendix B]{Ni2015}, we have
\[
\det G_m(a)=(a-b)^{m-1}\bigl(\,a+(m-1)b\,\bigr).
\]
Thus
\[
\det R_m(s,d,h)=s (2h)^{m-1}\bigl( \,(m+2)h -m\bar{d}^2\,\bigr)=(2h)^{m-1}\bigl( \,(m+2)hs -m{d}^2\,\bigr).
\]
Thus $J_2(X)$ is nondegenerate if and only if $\frac{h_ms_m}{d_m^2}\neq \frac{m}{m+2}$. Using (\ref{sdh}) we deduce that
\[
\frac{h_ms_m}{d_m^2}=\frac{m}{m+2}\frac{I_{m-1}(w)I_{m+3}(w)}{I_{m+1}(w)^2}.
\]
From the Cauchy  inequality we deduce that $I_{m+1}(w)^2\leq I_{m-1}(w)I_{m+3}(w)$. We cannot have equality because the functions $\sqrt{w(r)}\;r^{\frac{m-1}{2}}$ and $\sqrt{w(r)}\; r^{\frac{m+3}{2}}$ are linearly independent. 
\end{proof}

\end{document}